 \def\LaTeX{\leavevmode L\raise.42ex
   \hbox{\kern-.3em\size{\sf@size}{0pt}\selectfont A}\kern-.15em\TeX}
\newcommand{\BibTeX}{{\rm B\kern-.05em{\sc
i\kern-.025emb}\kern-.08em\TeX}}
\newtheorem{corollary}{Corollary}[section]
\newtheorem{theorem}{Theorem}[section]
\newtheorem{remark}[theorem]{Remark}
\newtheorem{definition}[theorem]{Definition}
\numberwithin{equation}{section}
\begin{document}

\title[Boas-type formulas  in Banach  spaces]{Boas-type formulas   in Banach  spaces with applications to analysis on  manifolds }

\maketitle

\begin{center}
\title{\textit{Dedicated to 85th Birthday of my teacher Paul Butzer}}
\maketitle
\end{center}
\begin{center}

\author{Isaac Z. Pesenson }\footnote{ Department of  Mathematics, Temple University,
 Philadelphia,
PA 19122; pesenson@math.temple.edu. The author was supported in
part by the National Geospatial-Intelligence Agency University
Research Initiative (NURI), grant HM1582-08-1-0019. }

\end{center}

\bigskip

{\bf Keywords:}{ Exponential and Bernstein vectors, Boas
interpolation formula, sampling, compact homogeneous manifolds, Heisenberg group, Schr\"{o}dinger representation.}

{\bf Subject classifications:} [2000] {Primary: 47D03, 44A15;
Secondary: 4705 }

\begin{abstract}

The paper contains Boas-type formulas for trajectories of one-parameter groups of operators in Banach spaces. The results are illustrated using one-parameter groups of operators which appear in representations of Lie groups. 
\end{abstract}

\section{Preface}

My theachers were Vla\rm dimir Abramovich Rokhlin (my Master Thesis Advisor) and Selim Grigorievich Krein (my PhD Thesis Advisor). I first met Paul Butzer when I was about 50 years old but I also consider him as my teacher \rm since his work had an enormous influence on my carrier and ultimately on my life.

After I graduated from university it was almost impossible for me to go straight to graduate  school because of the Soviet discrimination towards Jews. However, I  was trying to do some mathematics on my own. One day I came across  a reference to the book by P. Butzer and H. Berens  "Semi-Groups of operators and  approximation", Springer, 1967.  Since I had some background in Lie groups and Lie semi-groups  and  knew nothing about approximation theory  the title sounded  very intriguing to me. Unfortunately this excellent book had not been  translated into Russian. Nevertheless  I was lucky to get a microfilm of the book.  Every day during a few months I was visiting a local library which had a special device to read microfilms.

 By the time I finished  reading the book I already knew what I was  going to do: I  decided to  develop a similar "constructive theory of Interpolation Spaces"   through  replacing a single one-parameter semi-group of operators by a representation of a general Lie group in a Banach space.

I have to say that the  book by P. Butzer and H. Berens is an excellent introduction to a number of topics in classical harmonic analysis. In particular  it contains the first systematic treatment of the theory of Intermediate Spaces and a very detailed and application oriented treatment of the theory of Semi-Groups of Operators. Both these subjects  were considered as "hot" topics at the end of 60s (see for example \cite{KPet}, \cite{K}).  In many ways  this book is still up to date and I always recommend it to my younger colleagues. 

 Some time later I was influenced by the classical  work  of Paul with K. Scherer \cite{BS}, \cite{BSa}  in which they greatly clarified the relationships between the  Interpolation and Approximation spaces and by Paul's pioneering paper  with H. Berens and S. Pawelke \cite{BBP} about approximation on spheres.

 Mathematics I learned from Paul Butzer helped me to become a graduate student of Selim Krein another world level expert in Interpolation spaces and applications of semigroups to differential equations \cite{KPet}-\cite{KPS}.
 
 Many years later when I came to US and became interested in  sampling theory  I found  out that Paul had already been  working in this field for a number of years and I learned a lot from insightful and stimulating  work written by P. Butzer, W. Splettst\"o\"ser and R. Stens  \cite{BSS88}.

 My interactions with Paul Butzer's work shaped my entire mathematical life and the list of some of my papers  \cite{Pes75}-\cite{Pes11} is the best evidence of it.

\textit{This is what I mean when I say that Paul Butzer is my teacher.}

In conclusion I would like to mention   that  it was my discussions with Paul Butzer and Gerhard Schmeisser of their beautiful  work with Rudolf Stens   \cite{BSS},  that stimulated my interest in the topic of the present  paper. I am very grateful to them for this.

\bigskip

\section{Introduction}

 Consider a trigonometric
polynomial $ P(t)$ of one variable $t$ of order $n$ as a function on a
unit circle $\mathbb{T}$. For its derivative
$P^{'}(t)$ the so-called Bernstein 
inequality holds true
\begin{equation}
\left\|P^{'}\right\|_{L_{p}(\mathbb{T})}\leq n
\| P\|_{L_{p}(\mathbb{T})},\>\>\>1\leq p\leq \infty.
\end{equation} M. Riesz \cite{Riesz1},  \cite{Riesz} states that the Bernstein inequality is equivalent to what is known today as the Riesz interpolation formula

\begin{equation}
P^{'}(t)=\frac{1}{4\pi}\sum_{k=1}^{2n}(-1)^{k+1}
\frac{1}{\sin^{2}\frac{t_{k}}{2}} \>\>P(t+t_{k}), \>\>\>t\in
\mathbb{S}, \>\>\>t_{k}=\frac{2k-1}{2n}\pi.
\end{equation}
The next formula holds true for functions in the Bernstein space $\mathbf{B}_{\sigma}^{p}, \>\>1\leq p\leq \infty$ which is comprised of  all entire functions of exponential type $\sigma$ which belong to $L_{p}(\mathbb{R})$ on the real line.
\begin{equation}\label{Boas-0}
f^{'}(t)=\frac{\sigma}{\pi^{2}}\sum_{k\in\mathbb{Z}}\frac{(-1)^{k-1}}{(k-1/2)^{2}}\>\>
\textit{f}\left(t+\frac{\pi}{\sigma}(k-1/2)\right),\>\>\> t\in \mathbb{R}.
\end{equation}
This formula was obtained by R.P. Boas \cite{Boas}, \cite{Boas1} and is known as Boas or generalized Riesz formula.  Again, like in periodic case this formula is equivalent to the Bernstein inequality in $L_{p}(\mathbb{R})$
$$
\left\|f^{'}\right\|_{L_{p}(\mathbb{R})}\leq \sigma\left\|f\right\|_{L_{p}(\mathbb{R})}.
$$

Recently, in the interesting papers \cite{Schm} and \cite{BSS} among other important results the Boas-type formula (\ref{Boas-0}) was generalized to higher order. 

 In particular it was shown that for $f\in B_{\sigma}^{\infty},\>\>\>\sigma>0,$ the following formulas hold 
$$
f^{(2m-1)}(t)=\left(\frac{\sigma}{\pi}\right)^{2m-1}\sum_{k\in \mathbb{Z}}(-1)^{k+1}A_{m,k}f\left(t+\frac{\pi}{\sigma}(k-\frac{1}{2})\right), \>\>\>m\in \mathbb{N},
$$
$$
f^{(2m)}(t)=\left(\frac{\sigma}{\pi}\right)^{2m}\sum_{k\in \mathbb{Z}}(-1)^{k+1}B_{m,k}f\left(t+\frac{\pi}{\sigma}k\right), \>\>\>m\in \mathbb{N},
$$
where 
\begin{equation}\label{A}
A_{m,k}=(-1)^{k+1} \rm sinc ^{(2m-1)}\left(\frac{1}{2}-k\right)=
$$
$$
\frac{(2m-1)!}{\pi(k-\frac{1}{2})^{2m}}\sum_{j=0}^{m-1}\frac{(-1)^{j}}{(2j)!}\left(\pi(k-\frac{1}{2})\right)^{2j},\>\>\>m\in \mathbb{N},
\end{equation}
for $k\in \mathbb{Z}$ and 
\begin{equation}\label{B}
B_{m,k}=(-1)^{k+1} \rm sinc ^{(2m)}(-k)=\frac{(2m)!}{\pi k^{2m+1}}\sum_{j=0}^{m-1}\frac{(-1)^{j}(\pi k)^{2j+1}}{(2j+1)!},\>\>\>m\in \mathbb{N},\>\>\>k\in \mathbb{Z}\setminus {0},
\end{equation}
and 
\begin{equation}\label{B0}
B_{m,0}=(-1)^{m+1} \frac{\pi^{2m}}{2m+1},\>\>\>m\in \mathbb{N}.
\end{equation}

Let us remind that \rm sinc(t) is defined as $\frac{\sin \pi t}{\pi t}$, if $t\neq 0$, and $1$, if $t=0$.

To illustrate our results  let us assume that we are given an operator $D$ that generates a strongly continuous  group of isometries $e^{tD}$ in a Banach space $E$.  
\begin{definition}
The  subspace of exponential vectors $\mathbf{E}_{\sigma}(D), \>\>\sigma\geq 0,$ is defined as  a set of all vectors $f$ in $E$ 
 which belong to $\mathcal{D}^{\infty}=\bigcap_{k\in \mathbb{N}}\mathcal{D}^{k}$, where $\mathcal{D}^{k}$ is the domain of $D^{k}$,  and for which there exists a constant $C(f)>0$ such that 
\begin{equation}\label{Bernstein}
\|D^{k}f\|\leq C(f)\sigma^{k}, \>\>k\in \mathbb{N}.
\end{equation}
 
\end{definition}
Note, that every $\mathbf{E}_{\sigma}(D)$ is clearly a linear subspace of $E$. What is really  important is the fact that union of all $\mathbf{E}_{\sigma}(D)$ is dense in $E$ (Theorem \ref{Density}). 

\begin{remark}
It is worth to stress  that if $D$ generates a strongly continuous bounded semigroup then the set  $\bigcup_{\sigma\geq 0}\mathbf{E}_{\sigma}(D)$ may not be  dense in $E$. 

Indeed, (see  \cite{Nelson}) consider a strongly continuous bounded semigroup $T(t)$ in $L_{2}(0,\infty)$ defined for every $f\in L_{2}(0,\infty)$ as $T(t)f(x)=f(x-t),$ if $\>x\geq t$ and $T(t)f(x)=0,$ if $\>\>0\leq x<t$.   If $ f\in \mathbf{E}_{\sigma}(D)$ then for any $g\in L_{2}(0,\infty)$ the function $\left<T(t)f,\>g\right>$ is analytic in $t$ (see below section \ref{sec2}). Thus if $g$ has compact support then $\left<T(t)f,\>g\right>$ is zero for  all  $t$ which implies that $f$ is zero. In other words in this case every space $\mathbf{E}_{\sigma}(D)$ is trivial. 

\end{remark}

One of our results is that a vector $ f$ belongs to $ \mathbf{E}_{\sigma}(D)$ if and only if  the following sampling-type formulas hold
\begin{equation}\label{sam}
e^{tD}D^{2m-1}f=\left(\frac{\sigma}{\pi}\right)^{2m-1}\sum_{k\in \mathbb{Z}}(-1)^{k+1}A_{m,k}e^{\left(t+\frac{\pi}{\sigma}(k-1/2)   \right)D}f,\>\>\>m\in \mathbb{N},
\end{equation}

\begin{equation}\label{sam2222}
e^{tD}D^{2m}f=\left(\frac{\sigma}{\pi}\right)^{2m}\sum_{k\in \mathbb{Z}}(-1)^{k+1}B_{m,k}e^{\left(t+\frac{\pi k}{\sigma}    \right)D}f, \>\>\>m\in \mathbb{N},
\end{equation}
Which are equivalent to  the following Boas-type formulas
\begin{equation}\label{boas-0}
D^{2m-1}f=\left(\frac{\sigma}{\pi}\right)^{2m-1}\sum_{k\in \mathbb{Z}}(-1)^{k+1}A_{m,k}e^{\left(   \frac{\pi}{\sigma}(k-1/2)   \right)D}f,\>\>\>m\in \mathbb{N},\>\>\> f\in \mathbf{E}_{\sigma}(D),
\end{equation}
and
\begin{equation}\label{boas-01}
D^{2m}f=\left(\frac{\sigma}{\pi}\right)^{2m}\sum_{k\in \mathbb{Z}}(-1)^{k+1}B_{m,k}e^{\frac{\pi k}{\sigma}D}f,\>\>\>m\in \mathbb{N}\cup{0},\>\>\> f\in \mathbf{E}_{\sigma}(D).
\end{equation}
The formulas (\ref{sam}) and (\ref{sam2222}) are  a sampling-type formulas in the sense that they provide   explicit expressions  for a  trajectory $e^{tD}D^{k}f$ with $f\in \mathbf{E}_{\sigma}(D)$ in terms of a countable number of equally spaced samples of  trajectory of $f$.

Note, that \rm since $e^{tD} ,\>\>\>t\in \mathbb{R}, $ is a group of operators any trajectory $e^{tD}f,\>\>\>f\in E,$ is completely determined by any (single) sample $e^{t_{0}D}f,$ because for any $t\in \mathbb{R}$
$$
e^{tD}f=e^{(t-t_{0})D} \left(e^{t_{0}D}f\right).
$$
 The formulas (\ref{sam}) and (\ref{sam2222}) have,  however, a different  nature: they represent a trajectory as a "linear combination" of a countable number of samples.

It seems to be very interesting that an operator and the group can be rather sophisticated (think, for example, about a Schr\"{o}dinger operator $D=-\Delta+V(x)$ and the corresponding group $e^{itD}$ in $L_{2}(\mathbb{R}^{d})$). However, formulas (\ref{sam})-(\ref{boas-01}) are universal in the sense that they contain the same coefficients and the same sets of sampling points.

We are making a list of some  important  properties of  the Boas-type interpolation formulas  (compare to  \cite{BSS}):

\begin{enumerate}

\item  The formulas  hold for  vectors $f$ in the set $\bigcup_{\sigma\geq 0}\mathbf{E}_{\sigma}(D)$ which is dense in $E$ (see Theorem \ref{Density}).

\item The sample points $ \frac{\pi}{\sigma}(k-1/2) $ are uniformly spaced according to the Nyquist rate and are independent on $f$.

\item  The coefficients do not depend on $f$.

\item The coefficients decay like $O(k^{-2})$ as $k$ goes to infinity.

\item  In formulas (\ref{boas-0}) and (\ref{boas-01}) one has unbounded operators (in general case) on the left-hand side and bounded operators on the right-hand side.

\item There is a number of interesting relations between Boas-type formulas, see below (\ref{sigmaB}), (\ref{powerB}).
 
\end{enumerate}

Our main objective is to obtain  a set of new  formulas for one-parameter groups which appear when one considers representations of Lie groups (see section \ref{App}). 
Note, that  generalizations of (\ref{Boas-0}) with applications to compact homogeneous manifolds were initiated in \cite{Pes88}.

In  our applications we deal  with a set of non-commuting generators $D_{1},...,D_{d}$. In subsection \ref{compact}  these operators come from a representation of a compact Lie group and we are able to show that 
$
\cup_{\sigma\geq 0}\cup_{1\leq j\leq d} \mathbf{E}_{\sigma}(D_{j})$ is dense  in all appropriate Lebesgue spaces.
  We cannot prove  a similar  fact 
in the next subsection  \ref{non-compact} in which we consider a non-compact Heisenberg group. Moreover, in subsection \ref{Schrod} in which the Schr\"{o}dinger representation is discussed we note that this property  does  not  hold in general.

\section{ Boas-type   formulas for exponential  vectors}\label{sec2}

We assume that $D$ is a generator of one-parameter group of
isometries $e^{ tD}$ in a Banach space $E$ with the norm $\|\cdot
\|$. 

\begin{definition}
The Bernstein subspace
 $\mathbf{B}_{\sigma}(D), \>\>\sigma\geq 0,$ is defined as  a set of all vectors $f$ in $E$ 
 which belong to $\mathcal{D}^{\infty}=\bigcap_{k\in \mathbb{N}}\mathcal{D}^{k}$, where $\mathcal{D}^{k}$ is the domain of $D^{k}$ and for which 
\begin{equation}\label{Bernstein}
\|D^{k}f\|\leq \sigma^{k}\|f\|, \>\>k\in \mathbb{N}.
\end{equation}
\end{definition}

It is obvious that  $\mathbf{B}_{\sigma}(D)\subset \mathbf{E}_{\sigma}(D) , \>\>\>\sigma\geq 0, $
However, it is not even clear that  $\mathbf{B}_{\sigma}(D), \>\>\sigma\geq 0,$ is a linear subspace. It follows from the following interesting fact.

\begin{theorem}\label{t1}
  Let $D$ be a generator of an one parameter group of operators
$e^{tD }$ in a Banach space $E$ and $\|e^{tD}f\|=\|f\|$.  Then for every $\sigma\geq 0$ 
$$
\mathbf{B}_{\sigma}(D)= \mathbf{E}_{\sigma}(D) , \>\>\>\sigma\geq 0, 
$$

\end{theorem}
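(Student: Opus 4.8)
The plan is to establish the non-trivial inclusion $\mathbf{E}_{\sigma}(D)\subseteq\mathbf{B}_{\sigma}(D)$; the reverse inclusion $\mathbf{B}_{\sigma}(D)\subseteq\mathbf{E}_{\sigma}(D)$ is immediate on taking $C(f)=\|f\|$ in the definition of exponential vectors. The strategy is to transfer the question, by pairing with bounded linear functionals, to the classical Bernstein inequality for entire functions of exponential type that are bounded on the real line — working only with scalar functions, so that no complexified operator $e^{zD}$ is needed.

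First I would fix $f\in\mathbf{E}_{\sigma}(D)$ and $g\in E^{*}$ with $\|g\|\leq1$, and consider the scalar trajectory $F_{g}(t)=\langle e^{tD}f,\,g\rangle$, $t\in\mathbb{R}$. Since $f\in\mathcal{D}^{\infty}$, the standard differentiation rule for strongly continuous groups ($\frac{d}{dt}e^{tD}h=e^{tD}Dh=De^{tD}h$ for $h\in\mathcal{D}$, iterated) gives $F_{g}\in C^{\infty}(\mathbb{R})$ with $F_{g}^{(k)}(t)=\langle e^{tD}D^{k}f,\,g\rangle$, whence, using that $e^{tD}$ is an isometry and $\|g\|\le1$,
$$
|F_{g}^{(k)}(t)|\leq\|e^{tD}D^{k}f\|=\|D^{k}f\|\leq C(f)\sigma^{k},\qquad t\in\mathbb{R},\ k\in\mathbb{N}.
$$
Expanding $F_{g}$ in its Taylor series about $0$ and using this uniform-in-$t$ bound, the series $\sum_{k\ge0}\frac{z^{k}}{k!}F_{g}^{(k)}(0)$ converges for all $z\in\mathbb{C}$ and furnishes an entire extension of $F_{g}$ with $|F_{g}(z)|\le C(f)e^{\sigma|z|}$; thus $F_{g}$ is entire of exponential type $\le\sigma$. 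Since also $|F_{g}(t)|\le\|e^{tD}f\|\,\|g\|=\|f\|\,\|g\|\le\|f\|$ on $\mathbb{R}$, we obtain $F_{g}\in\mathbf{B}_{\sigma}^{\infty}$ with $\|F_{g}\|_{L_{\infty}(\mathbb{R})}\le\|f\|$.

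Then I would apply the classical Bernstein inequality $\|F'\|_{L_{\infty}(\mathbb{R})}\le\sigma\|F\|_{L_{\infty}(\mathbb{R})}$ for $F\in\mathbf{B}_{\sigma}^{\infty}$, iterated $k$ times — at each stage the derivative is again entire of exponential type $\le\sigma$ and, by the inequality just applied, bounded on $\mathbb{R}$ — to get $\|F_{g}^{(k)}\|_{L_{\infty}(\mathbb{R})}\le\sigma^{k}\|f\|$. In particular, since $F_{g}^{(k)}(0)=\langle D^{k}f,\,g\rangle$, we find $|\langle D^{k}f,\,g\rangle|\le\sigma^{k}\|f\|$. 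Taking the supremum over all $g$ in the unit ball of $E^{*}$ and invoking the Hahn--Banach theorem yields $\|D^{k}f\|\le\sigma^{k}\|f\|$ for every $k\in\mathbb{N}$, i.e. $f\in\mathbf{B}_{\sigma}(D)$.

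The step I expect to require the most care is producing the entire extension of $F_{g}$ of exponential type \emph{exactly} $\le\sigma$ (rather than merely $\le\sigma+\varepsilon$): this is precisely what the uniform Bernstein-type bound $\|D^{k}f\|\le C(f)\sigma^{k}$ buys us through the Taylor estimate above, and it is where the hypothesis $f\in\mathbf{E}_{\sigma}(D)$ is used in full strength. By contrast, the boundedness of $F_{g}$ on $\mathbb{R}$ — the second hypothesis needed to invoke Bernstein's inequality — is automatic from the isometry assumption $\|e^{tD}f\|=\|f\|$ and costs nothing. The only remaining ingredient is the routine smoothness of $t\mapsto\langle e^{tD}f,g\rangle$ with the stated formula for its derivatives, which follows from $f\in\mathcal{D}^{\infty}$ and elementary one-parameter group theory.
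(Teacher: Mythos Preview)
Your proposal is correct and follows essentially the same approach as the paper: pair $e^{tD}f$ with functionals to obtain scalar entire functions of exponential type $\le\sigma$ that are bounded on $\mathbb{R}$, apply the classical Bernstein inequality, and recover $\|D^{k}f\|\le\sigma^{k}\|f\|$ via Hahn--Banach. The only cosmetic difference is that the paper first defines $e^{zD}f$ by the power series $\sum_{r\ge0}z^{r}D^{r}f/r!$ in $E$ and then pairs with $\psi^{*}$, whereas you stay at the scalar level throughout and build the entire extension from the Taylor series of $F_{g}$ about $0$; the substance is identical.
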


\begin{proof}
If  $f\in \mathbf{E}_{\sigma}(D)$  then  for
any complex number $z$ we have

$$ 
\left\|e^{zD}f\right\|=\left\|\sum ^{\infty}_{r=0}(z^{r}D^{r}f)/r!\right\|\leq C(f) \sum
^{\infty}_{r=0}|z|^{r}\sigma^{r}/r!=C(f)e^{|z|\sigma}.
$$
It implies that for any functional $\psi^{*}\in E^{*}$ the scalar function
$\left<e^{zD}f,\psi^{*}\right>$ is an entire function
 of exponential type $\sigma $ which is bounded on the real axis 
by the constant $\|\psi^{*}\| \|f\|$.
 An application of the classical Bernstein inequality gives

$$\left\|\left<e^{tD}D^{k}f,\psi^{*}\right>\right\|_{C(R^{1})}=\left\|\left(\frac{d}{dt}\right)^{k}\left<e^{tD}f,\psi^{*}\right>\right\|_{
C(R^{1})} \leq\sigma^{k}\|\psi^{*}\| \|f\|.$$
From here  for $t=0$ we obtain 
$$ \left|\left<D^{k}f,\psi^{*}\right>\right|\leq \sigma ^{k} \|\psi^{*}\| \|f\|.
$$
Choice of $\psi^{*}\in E^{*}$ such that $\|\psi^{*}\|=1$ and $\left<D^{k}f,\psi^{*}\right>=\|D^{k}f\|$ gives
the inequality $\|D^{k}f\|\leq
 \sigma ^{k} \|f\|,\>\> k\in \mathbb{N}$, which implies Theorem.
\end{proof}

\begin{remark}
We just  mention that 
in the important case of a self-adjoint operator $D$ in a Hilbert space $E$ there is a way to describe Bernstein vectors in terms of  a spectral Fourier transform or in terms of the spectral measure associated with $D$ (see \cite{KPes},  \cite{Pes88}-\cite{Pes11}  for more details). 
\end{remark}

Let's introduce bounded operators

\begin{equation}\label{b1}
\mathcal{B}_{D}^{(2m-1)}(\sigma)f=\left(\frac{\sigma}{\pi}\right)^{2m-1}\sum_{k\in \mathbb{Z}}(-1)^{k+1}A_{m,k}e^{\frac{\pi}{\sigma}(k-1/2)D}f,\>\> f\in E, \>\>\sigma>0,\>\>\>m\in \mathbb{N},
\end{equation}
\begin{equation}\label{b2}
\mathcal{B}_{D}^{(2m)}(\sigma)f=\left(\frac{\sigma}{\pi}\right)^{2m}\sum_{k\in \mathbb{Z}}(-1)^{k+1}B_{m,k}e^{\frac{\pi k}{\sigma}D}f, \>\>f\in E,\> \sigma>0,\>m\in \mathbb{N},
\end{equation}
where $A_{m,k}$ and $B_{m,k}$ are defined in (\ref{A})-(\ref{B0}).  Both series converge in $E$ due to the following formulas (see \cite{BSS})

\begin{equation}
\left(\frac{\sigma}{\pi}\right)^{2m-1}\sum_{k\in \mathbb{Z}}\left|A_{m,k}\right|=\sigma^{2m-1},\>\>\>\>\>
 \left(\frac{\sigma}{\pi}\right)^{2m}\sum_{k\in \mathbb{Z}}\left|B_{m,k}\right|=\sigma^{2m}\label{id-2}.
 \end{equation}
 Since $\|e^{tD}f\|=\|f\|$ it implies that 
 
 \begin{equation}\label{norms}
 \|\mathcal{B}_{D}^{(2m-1)}(\sigma)f\|\leq \sigma^{2m-1}\|f\|,\>\>\>\>\>\|\mathcal{B}_{D}^{(2m)}(\sigma)f\|\leq \sigma^{2m}\|f\|,\>\>\>f\in E. 
 \end{equation}
 \begin{theorem}
 If $D$ generates a one-parameter strongly continuous bounded group of operators $e^{tD}$ in a Banach space $E$ then  the following conditions are equivalent:

\begin{enumerate}

\item  $f$ belongs to $\mathbf{B}_{\sigma}(D)$.

\item   The abstract-valued function $e^{tD}f$  is an entire function of exponential type $\sigma$ which is bounded on the real line.

\item The following Boas-type interpolation formulas hold true for $r\in \mathbb{N}$

\begin{equation}\label{B1}
D^{r}f=\mathcal{B}_{D}^{(r)}(\sigma)f,\>\>\>\>\>f\in \mathbf{B}_{\sigma}(D).
\end{equation}

\end{enumerate}

\end{theorem}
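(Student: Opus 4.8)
## Proof Proposal

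The plan is to establish the cycle of implications $(1)\Rightarrow(2)\Rightarrow(3)\Rightarrow(1)$, leaning on the scalar Boas theory from \cite{BSS} transferred to the vector-valued setting via functionals, exactly in the spirit of the proof of Theorem \ref{t1}. The crucial idea is that for $f\in E$, the vector-valued behavior of $t\mapsto e^{tD}f$ can be probed one functional at a time: for each $\psi^{*}\in E^{*}$ the scalar function $\varphi_{\psi^{*}}(t)=\langle e^{tD}f,\psi^{*}\rangle$ is a legitimate classical object to which the results of \cite{BSS} apply.

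First I would prove $(1)\Rightarrow(2)$. Assuming $f\in\mathbf{B}_{\sigma}(D)$, the same power-series estimate as in the proof of Theorem \ref{t1} shows $\|e^{zD}f\|\le \|f\|e^{|z|\sigma}$ for complex $z$, so $z\mapsto e^{zD}f$ is an $E$-valued entire function of exponential type $\le\sigma$, bounded by $\|f\|$ on $\mathbb{R}$ since the group is isometric; this is precisely $(2)$. (Entireness in the vector sense follows because the series $\sum z^{r}D^{r}f/r!$ converges in $E$ locally uniformly.) Next, $(2)\Rightarrow(3)$: fix $r\in\mathbb{N}$ and $\psi^{*}\in E^{*}$. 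Then $\varphi_{\psi^{*}}(t)=\langle e^{tD}f,\psi^{*}\rangle$ is an entire function of exponential type $\sigma$, bounded on $\mathbb{R}$ by $\|\psi^{*}\|\,\|f\|$, hence $\varphi_{\psi^{*}}\in B^{\infty}_{\sigma}$. The scalar Boas-type formulas from \cite{BSS} give, with the coefficients $A_{m,k},B_{m,k}$ of \eqref{A}--\eqref{B0},
\begin{equation}\label{scalarboas}
\varphi_{\psi^{*}}^{(r)}(0)=\Bigl(\tfrac{\sigma}{\pi}\Bigr)^{r}\sum_{k\in\mathbb{Z}}(-1)^{k+1}C_{r,k}\,\varphi_{\psi^{*}}\bigl(\tau_{r,k}\bigr),
\end{equation}
where $(C_{r,k},\tau_{r,k})$ equals $(A_{m,k},\tfrac{\pi}{\sigma}(k-\tfrac12))$ if $r=2m-1$ and $(B_{m,k},\tfrac{\pi k}{\sigma})$ if $r=2m$. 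Now $\varphi_{\psi^{*}}^{(r)}(0)=\langle D^{r}f,\psi^{*}\rangle$ (differentiating under the group, justified since $f\in\mathcal{D}^{\infty}$ and the scalar function is smooth with $\varphi^{(r)}_{\psi^{*}}(t)=\langle e^{tD}D^{r}f,\psi^{*}\rangle$), and $\varphi_{\psi^{*}}(\tau_{r,k})=\langle e^{\tau_{r,k}D}f,\psi^{*}\rangle$. The right-hand side of \eqref{scalarboas} is, by the absolute convergence \eqref{id-2} and the isometry bound, exactly $\langle \mathcal{B}_{D}^{(r)}(\sigma)f,\psi^{*}\rangle$ — here one needs that $\psi^{*}$ may be interchanged with the convergent $E$-valued series, which is immediate from continuity of $\psi^{*}$. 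Since $\psi^{*}\in E^{*}$ is arbitrary and $E^{*}$ separates points of $E$ by Hahn–Banach, we conclude $D^{r}f=\mathcal{B}_{D}^{(r)}(\sigma)f$, which is \eqref{B1}.

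Finally, $(3)\Rightarrow(1)$: suppose \eqref{B1} holds for all $r\in\mathbb{N}$ (this presupposes $f\in\mathcal{D}^{\infty}$, which I will take as part of the standing hypothesis of $(3)$, since $\mathcal{B}^{(r)}_{D}(\sigma)f$ is defined for all $f\in E$ but the identity $D^{r}f=\mathcal{B}^{(r)}_{D}(\sigma)f$ forces $f\in\mathcal{D}(D^{r})$). Then the norm estimate \eqref{norms} yields immediately $\|D^{r}f\|=\|\mathcal{B}_{D}^{(r)}(\sigma)f\|\le\sigma^{r}\|f\|$ for every $r$, which is the defining Bernstein inequality \eqref{Bernstein}, so $f\in\mathbf{B}_{\sigma}(D)$. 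This closes the cycle.

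The step I expect to require the most care is $(2)\Rightarrow(3)$: specifically, the clean justification that differentiation of $t\mapsto\langle e^{tD}f,\psi^{*}\rangle$ produces $\langle e^{tD}D^{r}f,\psi^{*}\rangle$ — this needs $f\in\mathcal{D}^{\infty}$, which in turn must be extracted from condition $(2)$ (vector-valued analyticity of the trajectory forces $f$ into every $\mathcal{D}^{k}$, with $D^{k}f$ equal to the $k$-th Taylor coefficient times $k!$), and the bookkeeping of matching the scalar formulas of \cite{BSS} in their exact normalization with the operators $\mathcal{B}^{(r)}_{D}(\sigma)$ defined in \eqref{b1}--\eqref{b2}. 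The interchange of the functional with the series is routine given \eqref{id-2}, and the Hahn–Banach separation argument is standard; everything else reduces to the classical one-dimensional theorem invoked pointwise in $E^{*}$.
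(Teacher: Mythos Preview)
Your proposal is correct and follows essentially the same approach as the paper: both run the cycle $(1)\Rightarrow(2)\Rightarrow(3)\Rightarrow(1)$, use the power-series argument of Theorem~\ref{t1} for $(1)\Rightarrow(2)$, test against functionals $\psi^{*}\in E^{*}$ to reduce $(2)\Rightarrow(3)$ to the scalar Boas formulas of \cite{BSS}, and invoke the norm bounds \eqref{norms} for $(3)\Rightarrow(1)$. The only cosmetic difference is that the paper first derives the identity for $e^{tD}D^{r}f$ at every $t$ and then sets $t=0$, whereas you work directly at $t=0$; you are also more explicit than the paper about why $(2)$ forces $f\in\mathcal{D}^{\infty}$ and about the interchange of $\psi^{*}$ with the series.
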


\begin{proof}
The proof of Theorem \ref{t1} shows that 1) $\rightarrow$ 2).   Then obviously for any  $\psi^{*}\in E^{*}$
 the  function $
F(t)=\left<e^{tD}f, \psi^{*}\right>
$
is of exponential type $\sigma$ and bounded on $\mathbb{R}.$ Thus by  \cite{BSS} we have
$$
F^{(2m-1)}(t)=\left(\frac{\sigma}{\pi}\right)^{2m-1}\sum_{k\in \mathbb{Z}}(-1)^{k+1}A_{m,k}F\left(t+\frac{\pi}{\sigma}(k-1/2)    \right),\>\>\>m\in \mathbb{N},
$$
$$
F^{(2m)}(t)=\left(\frac{\sigma}{\pi}\right)^{2m}\sum_{k\in \mathbb{Z}}(-1)^{k+1}B_{m,k}F\left(t+\frac{\pi k}{\sigma}    \right),\>\>\>m\in \mathbb{N}.
$$
Together with 
$$
\left(\frac{d}{dt}\right)^{k}F(t)=\left<D^{k}e^{tD}f,\psi^{*}\right>,
$$
  it shows
$$
\left<e^{tD}D^{2m-1}f, \psi^{*}\right>=\left(\frac{\sigma}{\pi}\right)^{2m-1}\sum_{k\in \mathbb{Z}}(-1)^{k+1}A_{m,k}\left<e^{\left(t+\frac{\pi}{\sigma}(k-1/2)    \right)D}f,\>\>\psi^{*}\right>,\>\>\>m\in \mathbb{N},
$$
and also
$$
\left<e^{tD}D^{2m}f, \psi^{*}\right>=\left(\frac{\sigma}{\pi}\right)^{2m}\sum_{k\in \mathbb{Z}}(-1)^{k+1}B_{m,k}\left<e^{\left(t+\frac{\pi k}{\sigma}    \right)D}f,\>\> \psi^{*}\right>,\>\>\>m\in \mathbb{N}.
$$
Since both series (\ref{b1}) and (\ref{b2}) converge in $E$ and the last two equalities hold for any $\psi^{*}\in E$ we obtain the next two formulas

\begin{equation}\label{sam1}
e^{tD}D^{2m-1}f=\left(\frac{\sigma}{\pi}\right)^{2m-1}\sum_{k\in \mathbb{Z}}(-1)^{k+1}A_{m,k}e^{\left(t+\frac{\pi}{\sigma}(k-1/2)   \right)D}f,\>\>\>m\in \mathbb{N},
\end{equation}
\begin{equation}\label{sam2}
e^{tD}D^{2m}f=\left(\frac{\sigma}{\pi}\right)^{2m}\sum_{k\in \mathbb{Z}}(-1)^{k+1}B_{m,k}e^{\left(t+\frac{\pi k}{\sigma}    \right)D}f, \>\>\>m\in \mathbb{N}.
\end{equation}
In turn when $t=0$ these formulas become formulas (\ref{B1}). 

The fact that 3) $\rightarrow$ 1) easily follows from  the formulas (\ref{B1})
and (\ref{norms}).  
Theorem is proved.
\end{proof}
\begin{corollary}
Every $\mathbf{B}_{\sigma}(D)$ is a closed linear subspace of $E$.
\end{corollary}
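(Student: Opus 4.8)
The plan is to read everything off the characterization established in the preceding theorem: for a fixed $\sigma\geq 0$ one has $f\in\mathbf{B}_{\sigma}(D)$ if and only if $f\in\mathcal{D}^{\infty}$ and the Boas-type identities $D^{r}f=\mathcal{B}_{D}^{(r)}(\sigma)f$ hold for every $r\in\mathbb{N}$, used together with the two facts that each $\mathcal{B}_{D}^{(r)}(\sigma)$ is a \emph{bounded linear} operator on $E$ and that it satisfies the estimate $\|\mathcal{B}_{D}^{(r)}(\sigma)f\|\leq\sigma^{r}\|f\|$ from (\ref{norms}). Neither statement of the corollary requires any new analysis; the substance is already contained in that theorem.

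For linearity, take $f,g\in\mathbf{B}_{\sigma}(D)$ and scalars $\alpha,\beta$. Since each domain $\mathcal{D}^{r}$ is a linear subspace of $E$, so is $\mathcal{D}^{\infty}$, hence $h:=\alpha f+\beta g\in\mathcal{D}^{\infty}$. Applying the linearity of $D^{r}$ on $\mathcal{D}^{r}$ and of $\mathcal{B}_{D}^{(r)}(\sigma)$, and using the Boas formulas for $f$ and $g$, I get $D^{r}h=\alpha D^{r}f+\beta D^{r}g=\alpha\mathcal{B}_{D}^{(r)}(\sigma)f+\beta\mathcal{B}_{D}^{(r)}(\sigma)g=\mathcal{B}_{D}^{(r)}(\sigma)h$ for every $r$; hence, by (\ref{norms}), $\|D^{r}h\|\leq\sigma^{r}\|h\|$, so $h\in\mathbf{B}_{\sigma}(D)$. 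Equivalently, one simply invokes the implication $3)\Rightarrow 1)$ of the theorem applied to $h$.

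For closedness, let $f_{n}\in\mathbf{B}_{\sigma}(D)$ with $f_{n}\to f$ in $E$. I will use that the generator $D$ of a strongly continuous group is a closed operator, and therefore so is each power $D^{r}$. From the Boas formulas $D^{r}f_{n}=\mathcal{B}_{D}^{(r)}(\sigma)f_{n}$ and the boundedness of $\mathcal{B}_{D}^{(r)}(\sigma)$ it follows that $D^{r}f_{n}\to\mathcal{B}_{D}^{(r)}(\sigma)f$ in $E$. Feeding the convergent pair $(f_{n},D^{r}f_{n})\to(f,\mathcal{B}_{D}^{(r)}(\sigma)f)$ into the closedness of $D^{r}$ yields $f\in\mathcal{D}^{r}$ and $D^{r}f=\mathcal{B}_{D}^{(r)}(\sigma)f$. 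Since this holds for every $r$, we have $f\in\mathcal{D}^{\infty}$ and, again by (\ref{norms}), $\|D^{r}f\|\leq\sigma^{r}\|f\|$; thus $f\in\mathbf{B}_{\sigma}(D)$.

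The only ingredient imported from outside the excerpt is the standard fact that the infinitesimal generator of a $C_{0}$-group, and hence every integral power of it, is a closed operator; this is the single mild point one has to be comfortable invoking. If one prefers to avoid even that, one can instead use characterization $2)$ of the theorem and check that a limit in $E$ of abstract-valued entire functions of exponential type $\leq\sigma$ that are uniformly bounded on $\mathbb{R}$ is again of that form (a vector-valued Phragm\'{e}n--Lindel\"{o}f / normal-families argument applied to $z\mapsto\langle e^{zD}f_{n},\psi^{*}\rangle$), but the closed-operator route is shorter and I would present that one.
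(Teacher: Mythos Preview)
Your argument is correct and is exactly the unpacking the paper has in mind: the corollary is stated immediately after the equivalence theorem with no separate proof, so the intended justification is precisely ``condition 3) characterizes $\mathbf{B}_{\sigma}(D)$ via the bounded operators $\mathcal{B}_{D}^{(r)}(\sigma)$, and (\ref{norms}) gives the Bernstein bound,'' which is what you wrote out.

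One small simplification for the closedness step: you do not actually need the closedness of $D^{r}$ as a black box. Because the Boas identities hold for \emph{every} order, you know that $D^{k}f_{n}=\mathcal{B}_{D}^{(k)}(\sigma)f_{n}\to\mathcal{B}_{D}^{(k)}(\sigma)f$ for each $k$, so you can climb one step at a time using only the closedness of $D$ itself: from $D^{k-1}f_{n}\to D^{k-1}f$ and $D\bigl(D^{k-1}f_{n}\bigr)\to\mathcal{B}_{D}^{(k)}(\sigma)f$ conclude $D^{k-1}f\in\mathcal{D}(D)$ and $D^{k}f=\mathcal{B}_{D}^{(k)}(\sigma)f$. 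This avoids importing the (true but slightly less elementary) fact that powers of a $C_{0}$-generator are closed.
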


\begin{corollary}
If $f$ belongs to  $\mathbf{B}_{\sigma}(D)$ then for any $\sigma_{1}\geq \sigma,\>\>\>\sigma_{2}\geq \sigma$ one has 
\begin{equation}\label{sigmaB}
\mathcal{B}^{(r)}_{D}(\sigma_{1})f=\mathcal{B}^{(r)}_{D}(\sigma_{2})f,\>\>\>r\in \mathbb{N}.
\end{equation}
\end{corollary}

 Let us introduce the  notation
 $$
  \mathcal{B}_{D}(\sigma)= \mathcal{B}_{D}^{(1)}(\sigma).
 $$
 One has the following "power" formula which easily follows from the fact that operators $
  \mathcal{B}_{D}(\sigma)$ and $D$ commute on any $\mathbf{B}_{\sigma}(D)$.
 \begin{corollary}
 For any $r\in \mathbb{N}$ and  any $f\in \mathbf{B}_{\sigma}(D)$
 \begin{equation}\label{powerB}
 D^{r}f=\mathcal{B}_{D}^{(r)}(\sigma)f=\mathcal{B}_{D}^{r}(\sigma)f,
 \end{equation}
 where $\mathcal{B}_{D}^{r}(\sigma)f=\mathcal{B}_{D}(\sigma)...\mathcal{B}_{D}(\sigma)f.$
\end{corollary}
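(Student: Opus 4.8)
The first equality $D^{r}f=\mathcal{B}_{D}^{(r)}(\sigma)f$ is precisely formula (\ref{B1}) of the preceding theorem applied to $f\in\mathbf{B}_{\sigma}(D)$, so the entire content of the corollary is the second equality $\mathcal{B}_{D}^{(r)}(\sigma)f=\mathcal{B}_{D}(\sigma)^{r}f$. The plan is to prove this by induction on $r$, the base case $r=1$ being a tautology since $\mathcal{B}_{D}(\sigma):=\mathcal{B}_{D}^{(1)}(\sigma)$.

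First I would record that $\mathbf{B}_{\sigma}(D)$ is invariant under $D$: if $f\in\mathbf{B}_{\sigma}(D)=\mathbf{E}_{\sigma}(D)$ (Theorem \ref{t1}), pick a constant $C(f)$ with $\|D^{k}f\|\le C(f)\sigma^{k}$; then $\|D^{k}(Df)\|=\|D^{k+1}f\|\le (C(f)\sigma)\,\sigma^{k}$, so $Df\in\mathbf{E}_{\sigma}(D)=\mathbf{B}_{\sigma}(D)$. Combining this with the case $r=1$ of (\ref{B1}), namely $\mathcal{B}_{D}(\sigma)f=Df$ for $f\in\mathbf{B}_{\sigma}(D)$, shows that $\mathcal{B}_{D}(\sigma)$ maps $\mathbf{B}_{\sigma}(D)$ into itself and coincides with $D$ there. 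This is also the source of the commutation claim in the statement: on $\mathbf{B}_{\sigma}(D)$ one has $\mathcal{B}_{D}(\sigma)Df=\mathcal{B}_{D}(\sigma)(Df)=D(Df)=D\mathcal{B}_{D}(\sigma)f$, consistent with the fact that each $e^{tD}$ commutes with $D$ on $\mathcal{D}^{1}$ and the defining series for $\mathcal{B}_{D}(\sigma)$ converges in $E$ by (\ref{id-2}).

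For the inductive step, assume $\mathcal{B}_{D}(\sigma)^{r-1}g=D^{r-1}g$ for every $g\in\mathbf{B}_{\sigma}(D)$. Applying the hypothesis to $g=Df\in\mathbf{B}_{\sigma}(D)$ gives
$$
\mathcal{B}_{D}(\sigma)^{r}f=\mathcal{B}_{D}(\sigma)^{r-1}\bigl(\mathcal{B}_{D}(\sigma)f\bigr)=\mathcal{B}_{D}(\sigma)^{r-1}(Df)=D^{r-1}(Df)=D^{r}f,
$$
and $D^{r}f=\mathcal{B}_{D}^{(r)}(\sigma)f$ once more by (\ref{B1}). This closes the induction and proves (\ref{powerB}).

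The only point requiring care — and it is not really an obstacle — is ensuring that every intermediate iterate $\mathcal{B}_{D}(\sigma)^{j}f$ remains in $\mathbf{B}_{\sigma}(D)$, so that (\ref{B1}) and the $r=1$ identity may be reapplied at each stage; this is exactly what the invariance of $\mathbf{B}_{\sigma}(D)$ under $D$, hence under $\mathcal{B}_{D}(\sigma)$, delivers. Everything else is purely formal.
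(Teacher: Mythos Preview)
Your proof is correct and is essentially the paper's argument written out in full: the paper merely remarks that the identity ``easily follows from the fact that operators $\mathcal{B}_{D}(\sigma)$ and $D$ commute on any $\mathbf{B}_{\sigma}(D)$,'' and your induction, together with the $D$-invariance of $\mathbf{B}_{\sigma}(D)$ via Theorem~\ref{t1}, is precisely how one unpacks that remark. Your use of the $\mathbf{E}_{\sigma}(D)$ characterization to establish invariance is the right move, since the defining Bernstein inequality $\|D^{k}(Df)\|\le\sigma^{k}\|Df\|$ is not obvious directly.
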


Let us introduce the following notations
$$\mathcal{B}^{(2m-1)}_{D}(\sigma,N)f=
\left(\frac{\sigma}{\pi}\right)^{2m-1}\sum_{|k|\leq N}(-1)^{k+1}A_{m,k}e^{\frac{\pi}{\sigma}(k-1/2)D_{j}}f,
$$
$$
\mathcal{B}^{(2m)}_{D}(\sigma, N)f=
\left(\frac{\sigma}{\pi}\right)^{2m}\sum_{|k|\leq N}(-1)^{k+1}B_{m,k}e^{\frac{\pi k}{\sigma}D_{j}}f.
$$
One obviously has the following set of approximate Boas-type formulas.

\begin{theorem}
If $f\in \mathbf{B}_{\sigma}(D)$ and $r\in \mathbb{N}$ then

\begin{equation}\label{appB}
D^{(r)}f=\mathcal{B}_{D}^{(r)}(\sigma, N)f+O(N^{-2}).
\end{equation}

\end{theorem}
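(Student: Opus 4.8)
The starting point is the exact Boas-type identity already established above: since $f\in\mathbf{B}_{\sigma}(D)$, formula (\ref{B1}) gives $D^{(r)}f=\mathcal{B}_{D}^{(r)}(\sigma)f$, where the right-hand side is the norm-convergent series (\ref{b1}) for $r=2m-1$ and (\ref{b2}) for $r=2m$. Subtracting the partial sum $\mathcal{B}_{D}^{(r)}(\sigma,N)f$, the error $D^{(r)}f-\mathcal{B}_{D}^{(r)}(\sigma,N)f$ is precisely the tail $\sum_{|k|>N}$ of that series, so the whole statement reduces to estimating the norm of this tail.

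Because $\|e^{tD}g\|=\|g\|$ for every $t$ and every $g\in E$, the triangle inequality gives at once
$$\bigl\|D^{(2m-1)}f-\mathcal{B}_{D}^{(2m-1)}(\sigma,N)f\bigr\|\le\Bigl(\frac{\sigma}{\pi}\Bigr)^{2m-1}\|f\|\sum_{|k|>N}|A_{m,k}|,$$
and likewise with $B_{m,k}$ in the even case. Thus everything comes down to the size of the coefficient tails. Reading off the top-degree term in (\ref{A}) and (\ref{B}) one has $A_{m,k}=(2m-1)(-1)^{m-1}\pi^{2m-3}(k-\tfrac12)^{-2}+O(k^{-4})$ and $B_{m,k}=2m(-1)^{m-1}\pi^{2m-2}k^{-2}+O(k^{-4})$ — this is property (4) of the Introduction — and comparing the tail with $\sum_{|k|>N}k^{-2}$ gives the asserted order, provided one performs the refinement described next.

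The delicate point, and the step I expect to be the main obstacle, is to obtain the power $N^{-2}$ rather than the $N^{-1}$ that the bare estimate above yields from an $O(k^{-2})$ coefficient tail. One must keep the alternating signs $(-1)^{k+1}$ and argue by summation by parts, grouping the tail into consecutive pairs $\{k,k+1\}$. Using $A_{m,k}=\alpha_{m}(k-\tfrac12)^{-2}+O(k^{-4})$, such a pair contributes a term of the form
$$\pm\,c_{m}\Bigl[\frac{e^{s_{k}D}f}{(k-\tfrac12)^{2}}-\frac{e^{s_{k+1}D}f}{(k+\tfrac12)^{2}}\Bigr]+O(k^{-4})\|f\|,\qquad s_{k}=\tfrac{\pi}{\sigma}\bigl(k-\tfrac12\bigr),$$
and the bracket splits, via the group law $e^{s_{k+1}D}=e^{s_{k}D}e^{\pi D/\sigma}$ and the isometry property, into a coefficient-difference part of size $O(k^{-3})\|f\|$ (which sums to $O(N^{-2})\|f\|$) and a part $(k+\tfrac12)^{-2}\,\|(I-e^{\pi D/\sigma})f\|$. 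Bounding this last contribution at the $O(N^{-2})$ level — rather than the evident $O(N^{-1})$ — is exactly where the band-limitedness of $f$ (statement (2) of the equivalence theorem above, i.e.\ that $e^{tD}f$ is entire of exponential type $\sigma$) has to enter, for instance by iterating the pairing or by reducing, after pairing against functionals $\psi^{*}\in E^{*}$, to the scalar Boas truncation estimates of \cite{BSS}. The even-order case $r=2m$ is identical, with $B_{m,k}$ and sample points $\pi k/\sigma$, and the remaining bookkeeping is routine.
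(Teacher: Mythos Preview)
The paper itself offers no proof: it says ``One obviously has the following set of approximate Boas-type formulas'' and moves on. The intended argument is exactly your first paragraph --- subtract the partial sum, bound the tail termwise using $\|e^{tD}f\|=\|f\|$ and the coefficient decay $|A_{m,k}|,|B_{m,k}|=O(k^{-2})$ (property~(4) in the Introduction). That is the whole ``proof'' the paper has in mind.

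You are right that this argument yields only $O(N^{-1})$, not $O(N^{-2})$, and your instinct to worry about the discrepancy is sound. But the elaborate pairing scheme you propose cannot close the gap, and not because of a missing trick: the bound $O(N^{-2})$ is simply false in general. Take the prototype $E=L^{\infty}(\mathbb{R})$, $D=d/dx$, $r=1$, and $f(x)=e^{i\sigma x}\in\mathbf{B}_{\sigma}(D)$. Then $e^{\frac{\pi}{\sigma}(k-1/2)D}f=e^{i\pi(k-1/2)}f=-i(-1)^{k}f$, so the alternating sign $(-1)^{k+1}$ in (\ref{b1}) is exactly cancelled and the tail becomes
\[
\frac{\sigma}{\pi^{2}}\sum_{|k|>N}\frac{i}{(k-\tfrac12)^{2}}\,f,
\]
whose norm is $\displaystyle\frac{\sigma}{\pi^{2}}\sum_{|k|>N}(k-\tfrac12)^{-2}\sim \frac{2\sigma}{\pi^{2}N}$. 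Thus the truncation error is genuinely $\asymp N^{-1}$ for this $f$, and no amount of summation by parts or appeal to the entire extension of $e^{tD}f$ can do better. (One may replace $e^{i\sigma x}$ by $\cos(\sigma x)$ for a real-valued example.)

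In short: your first paragraph is the paper's proof, and it proves the theorem with $O(N^{-1})$ in place of $O(N^{-2})$. The exponent $-2$ in (\ref{appB}) appears to be a slip; your attempt to justify it is chasing a statement that does not hold.
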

The next Theorem contains another Boas-type formula.

\begin{theorem}
If $f\in \mathbf{B}_{\sigma}(D)$ then the following sampling formula holds for $t\in \mathbb{R}$ 
and  $n\in \mathbb{N}$
\begin{equation}\label{s2}
e^{tD}D^{n}f=\sum_{k}\frac{e^{\frac{k\pi}{\sigma}D}f-f}{\frac{k\pi}{\sigma}}\left\{  n \>\rm sinc^{(n-1)}\left(\frac{\sigma t}{\pi}-k\right)  +\frac{\sigma t}{\pi} \rm sinc^{(n)}\left(\frac{\sigma t}{\pi}-k\right)
   \right\}.
\end{equation}
In particular, for $n\in \mathbb{N}$
one has 
\begin{equation}\label{Q}
D^{n}f=\mathcal{Q}_{D}^{n}(\sigma)f,
\end{equation}
where the bounded operator $\mathcal{Q}_{D}^{n}(\sigma)$ is given by the formula
\begin{equation}
\mathcal{Q}_{D}^{n}(\sigma)f=n\sum_{k}\frac{e^{\frac{k\pi}{\sigma}D}f-f}{\frac{k\pi}{\sigma}}\left[   \rm sinc^{(n-1)}\left(-k\right)  +\rm sinc^{(n)}\left(-k\right)
   \right].
\end{equation}
\end{theorem}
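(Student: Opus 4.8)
The plan is to reduce the statement to the classical Whittaker--Kotel'nikov--Shannon sampling theory on $\mathbb{R}$ by testing against functionals, exactly as in the proofs above. Fix $\psi^{*}\in E^{*}$ and put $F(t)=\left<e^{tD}f,\psi^{*}\right>$. As in the proof of Theorem \ref{t1}, $F$ is an entire function of exponential type $\sigma$, bounded on the real line by $\|f\|\,\|\psi^{*}\|$, and $F^{(n)}(t)=\left<e^{tD}D^{n}f,\psi^{*}\right>$ for every $n$. A merely bounded band-limited function need not be square-integrable, so the sampling expansion cannot be applied to $F$ itself; I would therefore pass to the auxiliary function
$$
G(t)=\frac{F(t)-F(0)}{t},\qquad G(0)=F'(0),
$$
which is the scalar shadow of the $E$-valued map $t\mapsto\bigl(e^{tD}f-f\bigr)/t$ (with value $Df$ at $t=0$), so that the sample $G(k\pi/\sigma)$ is precisely $\dfrac{F(k\pi/\sigma)-F(0)}{k\pi/\sigma}$, the scalar counterpart of the coefficient occurring in (\ref{s2}).

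First I would check that $G$ is a band-limited $L_{2}$-function: it is entire of exponential type $\le\sigma$, being the quotient of the type-$\sigma$ function $F(\cdot)-F(0)$ (which vanishes at the origin) by $t$; and it lies in $L_{2}(\mathbb{R})$ since $|G(t)|\le\|F'\|_{C(\mathbb{R})}$ near $t=0$ while $|G(t)|\le 2\|F\|_{C(\mathbb{R})}/|t|$ for large $|t|$. By Paley--Wiener, $G$ belongs to the classical Bernstein space $B_{\sigma}^{2}$, hence has the sampling representation $G(t)=\sum_{k}G\!\left(\tfrac{k\pi}{\sigma}\right)\mathrm{sinc}\!\left(\tfrac{\sigma t}{\pi}-k\right)$, which for a $B_{\sigma}^{2}$-function converges locally uniformly together with all of its term-by-term derivatives. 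Writing $F(t)=F(0)+tG(t)$ and applying the Leibniz rule,
$$
F^{(n)}(t)=t\,G^{(n)}(t)+n\,G^{(n-1)}(t),
$$
I would substitute the differentiated sampling series and collect the two resulting families of terms; together with $G\!\left(\tfrac{k\pi}{\sigma}\right)=\dfrac{F(k\pi/\sigma)-F(0)}{k\pi/\sigma}$ this produces the scalar form of (\ref{s2}).

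It remains to lift the scalar identity to $E$ and to specialize. Because $e^{tD}$ is isometric, $\left\|\dfrac{e^{(k\pi/\sigma)D}f-f}{k\pi/\sigma}\right\|\le\dfrac{2\sigma\|f\|}{\pi|k|}$, while every derivative of $\mathrm{sinc}$ decays like $O(|x|^{-1})$ at infinity; hence for each fixed $t$ the $k$-th summand of the $E$-valued series on the right of (\ref{s2}) is $O(k^{-2})$, so that series converges absolutely in $E$ and defines a bounded operator applied to $f$. Pairing that $E$-valued series with an arbitrary $\psi^{*}\in E^{*}$ reproduces the scalar identity established in the previous paragraph, whose left side is $\left<e^{tD}D^{n}f,\psi^{*}\right>$; the Hahn--Banach theorem then yields (\ref{s2}). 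Setting $t=0$ makes the factor $\tfrac{\sigma t}{\pi}$ vanish and gives the ``in particular'' formula (\ref{Q}), the boundedness of $\mathcal{Q}^{n}_{D}(\sigma)$ being contained in the same $O(k^{-2})$ estimate.

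The main obstacle is the analytic bookkeeping around the sampling series: one needs the standard but non-trivial fact, from the circle of ideas of \cite{BSS88}, \cite{BSS}, that the Shannon expansion of a $B_{\sigma}^{2}$-function may be differentiated term by term arbitrarily often with locally uniform convergence, and one must combine the $O(|k|^{-1})$ decay of the vector coefficients with the $O(|k|^{-1})$ decay of $\mathrm{sinc}^{(n-1)}$ and $\mathrm{sinc}^{(n)}$ carefully enough---and uniformly enough in the test functional---to carry the scalar identity over to $E$. Everything else is the functional-analytic ``scalarization'' already used in the preceding theorems.
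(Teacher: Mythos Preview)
Your proposal is correct and follows essentially the same route as the paper's own proof: scalarize via a functional to get $F\in B_\sigma^\infty(\mathbb{R})$, pass to the difference quotient $G(t)=(F(t)-F(0))/t\in B_\sigma^2(\mathbb{R})$, apply the Shannon sampling expansion to $G$, differentiate term by term, recombine via the Leibniz identity $F^{(n)}=tG^{(n)}+nG^{(n-1)}$, and then lift back to $E$. If anything, your write-up is more careful than the paper's at two points the paper leaves implicit: the verification that $G\in B_\sigma^2$, and the $O(k^{-2})$ estimate ensuring absolute convergence of the $E$-valued series (and hence boundedness of $\mathcal{Q}_D^n(\sigma)$).
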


\begin{proof}

         If $f\in \mathbf{B}_{\sigma}(D)$ then for any $g^{*}\in E^{*}$ the function $F(t)=\left<e^{tD}f,\>g^{*}\right>$ belongs to $B_{\sigma}^{\infty}(\mathbb{R})$.

We consider $F_{1}\in B_{\sigma}^{2}( \mathbb{R}),$ which is defined as follows.
If $t\neq 0$ then 
\begin{equation}\label{F}
F_{1}(t)=\frac{F(t)-F(0)}{t}=\left<\frac{e^{tD}f-f}{t},\>g^{*}\right>,
\end{equation}
and if $t=0$ then
$
F_{1}(t)=\frac{d}{dt}F(t)|_{t=0}=\left<Df,\>g^{*}\right>.
$
We have
$$
F_{1}(t)=\sum_{k}F_{1}\left(\frac{k\pi}{\sigma}\right)\> \rm sinc\left(\frac{\sigma t}{\pi}-k\right).
$$
From here we obtain the next formula
$$
\left(\frac{d}{dt}\right)^{n}F_{1}(t)=\sum_{k}F_{1}\left(\frac{k\pi}{\sigma} \right) \rm sinc^{(n)}\left(\frac{\sigma t}{\pi}-k\right)
$$
and \rm since 
$$
\left(\frac{d}{dt}\right)^{n}F(t)=n\left(\frac{d}{dt}\right)^{n-1}F_{1}(t)+t\left(\frac{d}{dt}\right)^{n}F_{1}(t)
$$
we obtain 
$$
\left(\frac{d}{dt}\right)^{n}F(t)=n\sum_{k}F_{1}\left(\frac{k\pi}{\sigma}\right)\> \rm sinc^{(n-1)}\left(\frac{\sigma t}{\pi}-k\right)+\frac{\sigma t}{\pi}\sum_{k}F_{1}\left(\frac{k\pi}{\sigma}\right)\> \rm sinc^{(n)}\left(\frac{\sigma t}{\pi}-k\right)
$$
Since
$
\left(\frac{d}{dt}\right)^{n}F(t)=\left<D^{n}e^{tD}f, g^{*}\right>,
$
and
$$
F_{1}\left(\frac{k\pi}{\sigma}\right)=\left<\frac{e^{\frac{k\pi}{\sigma}D}f-f}{\frac{k\pi}{\sigma}}, g^{*}\right>
$$
we obtain  that  for $t\in \mathbb{R},\>\>n\in \mathbb{N},$
$$
D^{n}e^{tD}f=\sum_{k}\frac{e^{\frac{k\pi}{\sigma}D}f-f}{\frac{k\pi}{\sigma}}\left[  n\> \rm sinc^{(n-1)}\left(\frac{\sigma t}{\pi}-k\right)  +\frac{\sigma t}{\pi} \rm sinc^{(n)}\left(\frac{\sigma t}{\pi}-k\right)
   \right].
$$ 
Theorem is proved. 
\end{proof}

The next Theorem shows that Boas-type formulas make sense for a dense set of vectors. 
\begin{theorem}\label{Density}
The set  $\bigcup_{\sigma\geq 0}\mathbf{B}_{\sigma}(D)$ is dense in $E$.
\end{theorem}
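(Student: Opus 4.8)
The plan is to approximate an arbitrary $f\in E$ by smoothing its orbit $t\mapsto e^{tD}f$ against band-limited kernels. For $\phi$ in the Schwartz class $\mathcal S(\mathbb R)$ put
$$
f_\phi=\int_{\mathbb R}\phi(t)\,e^{tD}f\,dt ,
$$
a Bochner integral which is well defined because $t\mapsto e^{tD}f$ is continuous and $\|e^{tD}f\|=\|f\|$, so that $\|\phi(t)e^{tD}f\|=|\phi(t)|\,\|f\|$ is integrable; moreover $\|f_\phi\|\le\|\phi\|_{L^1}\|f\|$. I would first show that if $\widehat\phi$ is supported in $[-\sigma,\sigma]$ then $f_\phi\in\mathbf B_\sigma(D)$, and then that a suitable sequence of such $\phi$'s makes $f_\phi$ converge to $f$.

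Next I would record two elementary properties of $f_\phi$. Using the group law and a change of variables, $e^{sD}f_\phi=\int_{\mathbb R}\phi(t-s)e^{tD}f\,dt=f_{\tau_s\phi}$, where $\tau_s\phi(t)=\phi(t-s)$. Second, writing $\frac{\phi(t-s)-\phi(t)}{s}=-\frac1s\int_0^s\phi'(t-u)\,du$ and dominating this, for $|s|\le1$, by $\sup_{|u|\le1}|\phi'(t-u)|\in L^1(\mathbb R)$, the dominated convergence theorem gives $f_\phi\in\mathcal D(D)$ with $Df_\phi=-f_{\phi'}$. Iterating, $f_\phi\in\mathcal D^\infty$ and $D^kf_\phi=(-1)^kf_{\phi^{(k)}}$, whence $\|D^kf_\phi\|\le\|\phi^{(k)}\|_{L^1}\,\|f\|$ for all $k$. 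Now impose that $\widehat\phi$ be supported in $[-\sigma,\sigma]$: then $\phi$ is (the restriction of) an entire function of exponential type $\sigma$ lying in $L^1(\mathbb R)$, and the classical Bernstein inequality in $L^1$ gives $\|\phi^{(k)}\|_{L^1}\le\sigma^k\|\phi\|_{L^1}$ for every $k$ (iterate the case $k=1$, noting $\phi^{(k)}$ is again band-limited to $[-\sigma,\sigma]$). Combining, $\|D^kf_\phi\|\le\sigma^k\bigl(\|\phi\|_{L^1}\|f\|\bigr)$ for all $k$, so $f_\phi\in\mathbf E_\sigma(D)$, and hence $f_\phi\in\mathbf B_\sigma(D)$ by Theorem \ref{t1}.

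To finish I would fix $\phi_0\in\mathcal S(\mathbb R)$ with $\widehat{\phi_0}$ supported in $[-1,1]$ and $\int_{\mathbb R}\phi_0=1$ (for instance take $\widehat{\phi_0}$ a smooth bump equal to $1$ near the origin), and set $\phi_n(t)=n\,\phi_0(nt)$. Then $\widehat{\phi_n}$ is supported in $[-n,n]$, $\|\phi_n\|_{L^1}=\|\phi_0\|_{L^1}$, and $\int_{\mathbb R}\phi_n=1$, so by the previous paragraph $f_{\phi_n}\in\mathbf B_n(D)\subset\bigcup_{\sigma\ge0}\mathbf B_\sigma(D)$. Since $\int_{\mathbb R}\phi_n=1$,
$$
\|f_{\phi_n}-f\|\le\int_{\mathbb R}|\phi_n(t)|\,\bigl\|e^{tD}f-f\bigr\|\,dt ,
$$
and splitting the integral at $|t|=\delta$, using strong continuity of $e^{tD}$ at $0$ on $\{|t|<\delta\}$ (the bound is uniform in $n$ because $\|\phi_n\|_{L^1}=\|\phi_0\|_{L^1}$) and the estimate $\|e^{tD}f-f\|\le2\|f\|$ together with $\int_{|t|\ge\delta}|\phi_n|=\int_{|s|\ge n\delta}|\phi_0|\to0$ on $\{|t|\ge\delta\}$, we obtain $f_{\phi_n}\to f$. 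As $f\in E$ was arbitrary, $\bigcup_{\sigma\ge0}\mathbf B_\sigma(D)$ is dense in $E$.

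The step I expect to need the most care is the differentiation formula $D^kf_\phi=(-1)^kf_{\phi^{(k)}}$: it amounts to commuting the unbounded, merely strongly continuous generator $D$ with a Bochner integral and justifying the limit defining $Df_\phi$ via dominated convergence in $E$. Everything else — the Bernstein estimate in $L^1$, the choice of a band-limited Schwartz approximate identity, and the approximate-identity convergence argument — is routine.
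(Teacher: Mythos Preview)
Your proof is correct and follows the same strategy as the paper's: smooth the orbit $t\mapsto e^{tD}f$ against a band-limited $L^1$ kernel to produce a vector in some $\mathbf{B}_\sigma(D)$, then let the kernel concentrate as an approximate identity. The paper differs only in cosmetic details---it verifies $f_\phi\in\mathbf{B}_\sigma(D)$ by extending $\tau\mapsto e^{\tau D}f_\phi$ to an entire $E$-valued function of exponential type (rather than your route via $\|D^kf_\phi\|\le\|\phi^{(k)}\|_{L^1}\|f\|$, the scalar Bernstein inequality, and Theorem~\ref{t1}), uses the explicit kernel $h(t)=a\bigl(\sin(t/4)/t\bigr)^4$ in place of a Schwartz bump with compactly supported Fourier transform, and phrases the final convergence estimate via the modulus of continuity $\Omega(f,\sigma^{-1})$ rather than the $|t|<\delta$ / $|t|\ge\delta$ split.
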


\begin{proof}

Note that if  $\phi\in L_{1}(\mathbb{R}),\>\>\>\|\phi\|_{1}=1,$ is an entire function of exponential
type $\sigma$ then for any $f\in E$ the vector
$$
g=\int _{-\infty}^{\infty}\phi(t)e^{tD}fdt
$$
belongs to $\mathbf{B}_{\sigma}(D).$ Indeed,  for every real $\tau$ we
have
$$
e^{\tau
D}g=\int_{-\infty}^{\infty}\phi(t)e^{(t+\tau)D}fdt=\int_{-\infty}^{\infty}\phi(
t-\tau)e^{tD}fdt.
$$
 Using this formula we can extend the abstract function $e^{\tau D}g$ to the
complex plane as
$$
e^{zD}g=\int_{-\infty}^{\infty}\phi(t-z)e^{tD}fdt.
$$
 Since by assumption  $h$ is an entire function of exponential
type $\sigma$ and $\|\phi\|_{L_{1}(\mathbb{R})}=1$
 we have
$$
\|e^{zD}g\|\leq
\|f\|\int_{-\infty}^{\infty}|\phi(t-z)|dt\leq\|f\|e^{\sigma|z|}.
 $$
 This inequality  implies that $g$ belongs to $\mathbf{B}_{\sigma}(D)$.

Let
 $$
h(t)=a\left(\frac{\sin (t/4)}{t}\right)^{4}
$$
 and
$$
a=\left(\int_{-\infty}^{\infty}\left(\frac{\sin
(t/4)}{t}\right)^{4}dt\right)^{-1}.
$$
Function $h$ will have the
 following properties:

\begin{enumerate}

\item $h$ is an even nonnegative entire function of exponential
type one;

\item $h$ belongs to $L_{1}(\mathbb{R})$ and its
$L_{1}(\mathbb{R})$-norm is $1$;

\item  the integral
\begin{equation}\int_{-\infty}^{\infty}h(t)|t|dt
\end{equation} 
is finite.
\end{enumerate}
Consider
 the following vector 
\begin{equation}
\mathcal{R}_{h}^{\sigma}(f)=
\int_{-\infty}^{\infty}h(t)e^{\frac{t}{\sigma}D
} fdt=\int_{-\infty}^{\infty}h(t\sigma)e^{tD}fdt,\label{id}
 \end{equation}
Since the function $h(t)$ has exponential type one the function
$h(t\sigma)$ has the type $\sigma$. It  implies (by the previous) that $\mathcal{R}_{h}^{\sigma}(f)$ belongs to $\mathbf{B}_{\sigma}(D)$.

The modulus of
continuity is defined as in \cite{BB}
$$
\Omega(f,s)=\sup_{|\tau|\leq
s}\left\|\Delta_{\tau}f\right\|,\>\>\>\>
 \Delta_{\tau}f=(I-e^{\tau D})f. \label{dif}
$$
Note, that for every $f\in E$ the modulus $\Omega(f,s)$ goes to zero when $s$ goes to zero.  Below we are using  an easy verifiable inequality
$
\Omega\left(f, as\right)\leq \left(1+a\right)\Omega(f,
s),\>\>\> a\in \mathbb{R}_{+}.
$
We obtain
$$
\|f-\mathcal{R}_{h}^{\sigma}(f)\|\leq
\int_{-\infty}^{\infty}h(t)\left\|\Delta_{t/\sigma}f\right\|dt\leq
\int_{-\infty}^{\infty}h(t)\Omega\left(f, t/\sigma\right)dt\leq 
$$
$$
\Omega\left(f,
\sigma^{-1}\right)\int_{-\infty}^{\infty}h(t)(1+|t|)dt\leq
{C}_{h}\Omega\left(f,
\sigma^{-1}\right),
$$
where the integral
 $$
C_{h}=\int_{-\infty}^{\infty}h(t)(1+|t|)dt
$$
 is finite by the choice of $h$. Theorem  is proved.
\end{proof}

\section{Analysis on compact homogeneous manifolds}

 Let $\mathbf{M}, \>\>\rm dim \>\mathbf{M}=m,$ be a
compact connected $C^{\infty}$-manifold. One says  that a compact
Lie group $G$ effectively acts on $\mathbf{M}$ as a group of
diffeomorphisms if the following holds true: 

\begin{enumerate}

\item   Every element $g\in G$ can be identified with a diffeomorphism
$
g: \mathbf{M}\rightarrow \mathbf{M}
$
of $\mathbf{M}$ onto itself and
$
g_{1}g_{2}\cdot x=g_{1}\cdot(g_{2}\cdot x), g_{1}, g_{2}\in G,
x\in \mathbf{M},
$
where $g_{1}g_{2}$ is the product in $G$ and $g\cdot x$ is the
image of $x$ under $g$.

\item The identity $e\in G$ corresponds to the trivial diffeomorphism
 $
e\cdot x=x.
$
\item For every $g\in G, g\neq e,$ there exists a point $x\in \mathbf{M}$ such
that $g\cdot x\neq x$.

\end{enumerate}

A group $G$ acts on $\mathbf{M}$ \textit{transitively} if in addition to
1)- 3) the following property holds:
4) for any two points $x,y\in \mathbf{M}$ there exists a diffeomorphism
$g\in G$ such that
$\>\>\>
g\cdot x=y.
$

A \textit{homogeneous} compact manifold $\mathbf{M}$ is an
$C^{\infty}$-compact manifold on which transitively acts a compact
Lie group $G$. In this case $\mathbf{M}$ is necessary of the form $G/K$,
where $K$ is a closed subgroup of $G$. The notation $L_{p}(\mathbf{M}),
1\leq p\leq \infty,$ is used for the usual Banach  spaces
$L_{p}(\mathbf{M},dx), 1\leq p\leq \infty$, where $dx$ is the normalized invariant
measure.

Every element $X$ of the Lie algebra of $G$ generates a vector
field on $\mathbf{M}$ which we will denote by the same letter $X$. Namely,
for a smooth function $f$ on $\mathbf{M}$ one has
$$
 Xf(x)=\lim_{\tau\rightarrow 0}\frac{f(\exp \tau X \cdot x)-f(x)}{\tau}
 $$
for every $x\in \mathbf{M}$. In the future we will consider on $\mathbf{M}$ only
such vector fields. Translations along integral curves of such
vector field $X$ on $\mathbf{M}$  can be identified with a one-parameter
group of diffeomorphisms of $\mathbf{M}$ which is usually denoted as $\exp
\tau X, -\infty<\tau<\infty$. At the same time the one-parameter group
$\exp \tau X, -\infty<\tau<\infty,$ can be treated as a strongly
continuous one-parameter group of operators in a space $L_{p}(\mathbf{M}),
1\leq p\leq \infty$ which acts on functions according to the
formula
$
f\rightarrow f(\exp \tau X\cdot x), \tau \in \mathbb{R}, f\in L_{p}(\mathbf{M}),
x\in \mathbf{M}.
$
 The
generator of this one-parameter group will be denoted as $D_{X,p}$
and the group itself will be denoted as
$
e^{\tau D_{X,p}}f(x)=f(\exp \tau X\cdot x), t\in \mathbb{R}, f\in
L_{p}(\mathbf{M}), x\in \mathbf{M}.
$

According to the general theory of one-parameter groups in Banach
spaces \cite{BB}, Ch. I,  the operator $D_{X,p}$ is a closed
operator in every $L_{p}(\mathbf{M}), 1\leq p\leq \infty.$ In order to
simplify notations we will often use notation $D_{X}$ instead of
$D_{X, p}$.

It is known \cite{H2},
 Ch. V,  that
 on every compact homogeneous manifold $\mathbf{M}=G/K$ there exist vector fields
  $X_{1},X_{2},..., X_{d},$ $ \>\>\>d=\rm dim\>\> G,$ such that the second order  differential
   operator 
   $$
X_{1}^{2}+ X_{2}^{2}+ ...+ X_{d}^{2}, \>\>\>d=\rm dim\> G,
   $$
   commutes with all vector fields $X_{1},...,X_{d}$ on $\mathbf{M}$. The
   corresponding operator in $L_{p}(\mathbf{M}), 1\leq p\leq\infty,$
\begin{equation}
-\mathcal{L}=D_{1}^{2}+ D_{2}^{2}+ ...+ D_{d}^{2},\>\>\>
D_{j}=D_{X_{j}}, \>\>\>d=\rm dim\> G,\label{Laplacian}
\end{equation}
commutes with all operators $D_{j}=D_{X_{j}}$. This operator
$\mathcal{L}$ which is usually called the Laplace operator is
involved in most of constructions and results of our paper.

 The  operator $\mathcal{L}$  is an elliptic differential operator which
is defined on $C^{\infty}(\mathbf{M})$ and we will use the same notation
$\mathcal{L}$ for its closure from $C^{\infty}(\mathbf{M})$ in $L_{p}(\mathbf{M}),
1\leq p\leq \infty$. In the case $p=2$ this closure is a
self-adjoint positive definite operator in the space $L_{2}(\mathbf{M})$.
The spectrum of this operator is discrete and goes to infinity
$0=\lambda_{0}<\lambda_{1}\leq \lambda_{2}\leq ...$, where
 we count each eigenvalue with its  multiplicity.  For eigenvectors corresponding 
 to eigenvalue $\lambda_{j}$ we will use notation $\varphi_{j}$, i. e. 
$$
 \mathcal{L}\varphi_{j}=\lambda_{j}\varphi_{j}.
$$
The spectrum and the set of eigenfunctions of $\mathcal{L}$ are the same in all spaces $L_{p}(\mathbb{S}^{d})$.

Let $\varphi_{0}, \varphi_{1}, \varphi_{2}, ...$ be a corresponding
complete system of orthonormal eigenfunctions and
$\textbf{E}_{\sigma}(\mathcal{L}), \sigma>0,$ be a span of all
eigenfunctions of $\mathcal{L}$ whose corresponding eigenvalues
are not greater $\sigma$.

In the rest of the paper the notations $\mathbb{D}=\{D_{1},...,
D_{d}\}, d= \rm dim \>G,$ will be used for differential operators in
$L_{p}(\mathbf{M}), 1\leq p\leq \infty,$ which are involved in the formula
(\ref{Laplacian}).

\begin{definition}[\cite{Pes90}, \cite{Pes08}].
We say that a  function $f\in L_{p}(\mathbf{M}), 1\leq p\leq \infty,$
belongs to the Bernstein space
$\mathbf{B}_{\sigma}^{p}(\mathbb{D}),
\mathbb{D}=\{D_{1},...,D_{d}\}, d=\rm dim \>G,$ if and only if for every
$1\leq i_{1},...i_{k}\leq d$ the following Bernstein inequality
holds true
 \begin{equation}
 \|D_{i_{1}}...D_{i_{k}}f\|_{p}\leq
 \sigma^{k}\|f\|_{p}, k\in \mathbb{N}.\label{Bern}
\end{equation}
We say that a  function $f\in L_{p}(\mathbf{M}), 1\leq p\leq \infty,$
belongs to the Bernstein space
$\mathbf{B}_{\sigma}^{p}(\mathcal{L}), $ if and only if for every
$k\in \mathbb{N}$ the following Bernstein inequality holds true
$$
\|\mathcal{L}^{k}f\|_{p}\leq \sigma^{k}\|f\|_{p}, k\in \mathbb{N}.
$$
\end{definition}

Since $\mathcal{L}$ in the space $L_{2}(\mathbf{M})$ is self-adjoint and
positive-definite there exists a unique positive square root
$\mathcal{L}^{1/2}$. In this case the last inequality is
equivalent to the inequality
$
\|\mathcal{L}^{k/2}f\|_{2}\leq \sigma^{k/2}\|f\|_{2}, k\in
\mathbb{N}.
$ 

Note that at this point it is not clear if the Bernstein spaces
$\mathbf{B}_{\sigma}^{p}(\mathbb{D}),\>
\mathbf{B}_{\sigma}^{p}(\mathcal{L})$ are linear spaces. The
facts  that these spaces  are linear, closed and invariant (with respect to operators $D_{j}$ ) were  established in \cite{Pes08}.

It was shown in \cite{Pes08} that for 
$1\leq p, q\leq \infty$ the following equality holds true
$$
\mathbf{B}_{\sigma}^{p}(\mathbb{D})
=\mathbf{B}_{\sigma}^{q}(\mathbb{D})\equiv
\mathbf{B}_{\sigma}(\mathbb{D}),\>\>\>\mathbb{D}=\{D_{1},...,D_{d}\},
$$
which means that if the Bernstein-type inequalities (\ref{Bern})
are satisfied for a single  $1\leq p\leq \infty$, then they are
satisfied for all $1\leq p\leq \infty$.

\begin{definition}
$\mathcal{E}_{\lambda}(\mathcal{L}), \lambda>0,$ be a span of all
eigenfunctions of $\mathcal{L}$ whose corresponding eigenvalues
are not greater $\lambda$.
\end{definition}

 The following embeddings which describe relations between
Bernstein spaces $
\textbf{B}_{n}$ and eigen spaces $\mathcal{E}_{\lambda}(\mathcal{L})$  were proved in \cite{Pes08}
\begin{equation}\label{incl}
\textbf{B}_{\sigma}(\mathbb{D})\subset\mathcal{E}_{\sigma^{2}d}(\mathcal{L})\subset
\textbf{B}_{\sigma\sqrt{d}}(\mathbb{D}).
\end{equation} 
These embeddings obviously  imply the equality
$$
\bigcup_{\sigma>0} \textbf{B}_{\sigma}(\mathbb{D})=\bigcup_{\lambda}
\mathcal{E}_{\lambda}(\mathcal{L}),
$$
which means  \textit{that a function on $\mathbf{M}$ satisfies a Bernstein
inequality (\ref{Bern}) in the norm of $L_{p}(\mathbf{M}), 1\leq p\leq
\infty,$ if and only if it is a linear combination of
eigenfunctions of $\mathcal{L}$.}

As a consequence we obtain \cite{Pes08} the following Bernstein inequalities for $k\in \mathbb{N,}$
\begin{equation}
\|\mathcal{L}^{k}\varphi\|_{p}\leq
 \left(d\lambda^{2} \right)^{k}\|\varphi\|_{p},  \>d=\rm dim \>G,\>\>\varphi \in \mathcal{E}_{\lambda}(\mathcal{L}),\>\>1\leq p\leq \infty.
\end{equation}
One also has  \cite{Pes08} the  Bernstein-Nikolskii inequalities  
$$
\|D_{i_{1}}... D_{i_{k}}\varphi\|_{q}\leq C(\mathbf{M})
\lambda^{k+\frac{m}{p}-\frac{m}{q}}\|\varphi\|_{p}, \>k\in
\mathbb{N}, \>m=\rm dim \>\mathbf{M}, \>\varphi \in\mathcal{E}_{\lambda}(\mathcal{L}),
$$
 and 
$$
\|\mathcal{L}^{k}\varphi\|_{q}\leq C(\mathbf{M})
d^{k}\lambda^{2k+\frac{m}{p}-\frac{m}{q}}\|\varphi\|_{p}, \>k\in
\mathbb{N}, \>m=\rm dim \>\mathbf{M}, \>\varphi \in \mathcal{E}_{\lambda}(\mathcal{L}), 
$$
where $1\leq p\leq q\leq\infty$ and $C(\mathbf{M})$ is constant which depends just on the manifold.

It is known \cite{Z}, Ch. IV,  that every compact Lie group $G$ can be identified with a subgroup of orthogonal group $O(N)$ of an Euclidean space $\mathbb{R}^{N}$. It implies that every compact homogeneous manifold $\mathbf{M}$ can be identified with a submanifold which is  trajectory of a unit vector ${\bf e}\in \mathbb{R}^{N}$. Such identification of $\mathbf{M}$ with a submanifold of $\mathbb{S}^{N-1}$ is known as the  equivariant embedding  into $\mathbb{R}^{N}$. 

Having in mind the equivariant embedding of $\mathbf{M}$ into $\mathbb{R}^{N}$ one can introduce the space  $\mathbf{P}_{n}(\mathbf{M})$ of polynomials of degree $n$ on $\mathbf{M}$ as the set of restrictions to $\mathbf{M}$ of polynomials in $\mathbb{R}^{N}$ of degree $n$.  The following relations were proved in \cite{Pes08}:
$$
\textbf{P}_{n}(\mathbf{M})\subset \textbf{B}_{n}(\mathbb{D})\subset
\mathcal{E}_{n^{2}d}(\mathcal{L})\subset
\textbf{B}_{n\sqrt{d}}(\mathbb{D}), \>\>\>d=\rm dim \>G,\>\>\>n\in \mathbb{N},
$$
and
\begin{equation}\label{trig-eigen}
\bigcup _{n\in \mathbb{N}}\textbf{P}_{n}(\mathbf{M})=\bigcup_{\sigma\geq 0}
\textbf{B}_{\sigma}(\mathbb{D})=\bigcup_{j\in \mathbb{N}}
\mathcal{E}_{\lambda_{j}}(\mathcal{L}).
\end{equation}
The next Theorem  was proved in \cite{gp}, \cite{pg}.
\begin{theorem}
\label{prodthm}
If ${\bf M}=G/K$ is a compact homogeneous manifold and $\mathcal{L}$
is defined as in (\ref{Laplacian}), then for any $f$ and $g$ belonging
to $\mathcal{E}_{\omega}(\mathcal{L})$,  their pointwise product $fg$ belongs to
$\mathcal {E}_{4d\omega}(\mathcal{L})$, where $d$ is the \rm dimension of the
group $G$.

\end{theorem}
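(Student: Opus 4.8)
The plan is to reduce the claim to a bound on the growth of the iterated derivatives $D_{i_1}\cdots D_{i_k}(fg)$ and then to read off the spectral support of $fg$ from that bound. First I would apply the right-hand inclusion in (\ref{incl}): choosing $\sigma$ with $\sigma^{2}d=\omega$, that is $\sigma=\sqrt{\omega/d}$, one has $\sigma\sqrt d=\sqrt\omega$, hence $\mathcal{E}_{\omega}(\mathcal{L})\subset\mathbf{B}_{\sqrt\omega}(\mathbb{D})$. Because the space $\mathbf{B}_{\sigma}(\mathbb{D})$ does not depend on $p$, this gives for $f,g\in\mathcal{E}_{\omega}(\mathcal{L})$ the estimates
$$
\|D_{i_1}\cdots D_{i_l}f\|_{\infty}\le\omega^{l/2}\|f\|_{\infty},\qquad\|D_{i_1}\cdots D_{i_l}g\|_{\infty}\le\omega^{l/2}\|g\|_{\infty}
$$
for all $l\ge 0$ and all $1\le i_1,\dots,i_l\le d$; note that $f$, $g$, and hence $fg$, are smooth, since $f$ and $g$ are finite linear combinations of eigenfunctions of the elliptic operator $\mathcal{L}$.

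Next, each $D_j=D_{X_j}$ is a vector field, hence a derivation, so iterating the Leibniz rule gives, for any word $D_{i_1}\cdots D_{i_k}$,
$$
D_{i_1}\cdots D_{i_k}(fg)=\sum(D_{i_{a_1}}\cdots D_{i_{a_p}}f)(D_{i_{b_1}}\cdots D_{i_{b_q}}g),
$$
where the sum runs over the $2^{k}$ ways of splitting $\{1,\dots,k\}$ into ordered subsequences $a_1<\dots<a_p$ and $b_1<\dots<b_q$, $p+q=k$; the non-commutativity of the $D_j$ is irrelevant because the order of the operators inside each factor is inherited from the original word. Using $\|uv\|_{\infty}\le\|u\|_{\infty}\|v\|_{\infty}$ together with the previous estimates, each of the $2^{k}$ terms is at most $\omega^{p/2}\|f\|_{\infty}\,\omega^{q/2}\|g\|_{\infty}=\omega^{k/2}\|f\|_{\infty}\|g\|_{\infty}$, so
$$
\|D_{i_1}\cdots D_{i_k}(fg)\|_{\infty}\le(2\sqrt\omega)^{k}\|f\|_{\infty}\|g\|_{\infty}.
$$
Applying this with $2k$ in place of $k$ to each of the $d^{k}$ summands in the expansion $\mathcal{L}^{k}=(-1)^{k}(D_1^{2}+\dots+D_d^{2})^{k}=(-1)^{k}\sum_{j_1,\dots,j_k}D_{j_1}^{2}\cdots D_{j_k}^{2}$ yields
$$
\|\mathcal{L}^{k}(fg)\|_{\infty}\le d^{k}(2\sqrt\omega)^{2k}\|f\|_{\infty}\|g\|_{\infty}=(4d\omega)^{k}\|f\|_{\infty}\|g\|_{\infty},
$$
and since the invariant measure is normalized, $\|\mathcal{L}^{k}(fg)\|_{2}\le(4d\omega)^{k}\|f\|_{\infty}\|g\|_{\infty}$ for every $k$.

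Finally, I would expand $fg=\sum_{j}d_{j}\varphi_{j}$ in $L_{2}(\mathbf{M})$, so that $\|\mathcal{L}^{k}(fg)\|_{2}^{2}=\sum_{j}\lambda_{j}^{2k}|d_{j}|^{2}$. Then $\lambda_{j}^{2k}|d_{j}|^{2}\le(4d\omega)^{2k}\|f\|_{\infty}^{2}\|g\|_{\infty}^{2}$ for all $k$, which for $d_{j}\ne 0$ forces $(\lambda_{j}/(4d\omega))^{2k}$ to remain bounded as $k\to\infty$, hence $\lambda_{j}\le 4d\omega$. Therefore $d_{j}=0$ whenever $\lambda_{j}>4d\omega$, i.e. $fg\in\mathcal{E}_{4d\omega}(\mathcal{L})$. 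The one place demanding care is the Leibniz bookkeeping and the step that turns the $d^{k}$ length-$2k$ words occurring in $\mathcal{L}^{k}$ into the single factor $(4d\omega)^{k}$; everything else is routine. I deliberately do not try to place $fg$ in some $\mathbf{B}_{\tau}(\mathbb{D})$ directly, since that would require the constant $\|fg\|_{\infty}$ rather than $\|f\|_{\infty}\|g\|_{\infty}$ — the spectral-support argument only needs a finite constant, so the loss is harmless.
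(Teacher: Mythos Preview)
Your argument is correct. The Leibniz expansion, the $L_\infty$ multiplicativity, and the passage from the uniform bound on $\|\mathcal{L}^{k}(fg)\|_{2}$ to the spectral support of $fg$ are all sound, and the constant $4d\omega$ falls out exactly from $d^{k}(2\sqrt{\omega})^{2k}$.

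There is nothing to compare with, however: the paper does \emph{not} prove Theorem~\ref{prodthm} here but only quotes it from \cite{gp,pg}. In those references the argument is of a rather different nature---it goes through the equivariant embedding of $\mathbf{M}$ into a Euclidean space $\mathbb{R}^{N}$ and the identification of elements of $\mathcal{E}_{\omega}(\mathcal{L})$ with restrictions of polynomials of controlled degree (compare the relations around (\ref{trig-eigen})); the product bound then comes from the additivity of polynomial degree. Your route is more intrinsic: it never leaves the manifold, uses only the inclusion (\ref{incl}) and the derivation property of the $D_j$, and would work whenever one has a family of commuting-or-not first-order operators whose squares sum to $-\mathcal{L}$ together with a Bernstein inequality. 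The price is that your proof consumes (\ref{incl}) as an input, whereas the polynomial argument is logically independent of it; but within the present paper (\ref{incl}) is already available from \cite{Pes08}, so there is no circularity.
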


Using this Theorem and (\ref{incl}) we obtain the following

\begin{corollary}\label{prod}
 If ${\bf M}=G/K$ is a compact homogeneous manifold and $f,\> g\in  \textbf{B}_{\sigma}(\mathbb{D}) $
 then their product  $fg$ belongs to $ \textbf{B}_{2d\sigma}(\mathbb{D})$, where $d$ is the \rm dimension of the
group $G$.
 \end{corollary}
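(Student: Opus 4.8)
The plan is to sandwich the Bernstein spaces $\mathbf{B}_{\sigma}(\mathbb{D})$ between the eigenspaces $\mathcal{E}_{\lambda}(\mathcal{L})$ using the embeddings (\ref{incl}), apply the multiplicative property of eigenspaces furnished by Theorem \ref{prodthm}, and then pass back to the Bernstein scale. First, given $f,g\in\mathbf{B}_{\sigma}(\mathbb{D})$, the left-hand inclusion in (\ref{incl}) shows that both $f$ and $g$ lie in $\mathcal{E}_{\sigma^{2}d}(\mathcal{L})$, so we may take $\omega=\sigma^{2}d$ in Theorem \ref{prodthm}.

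By Theorem \ref{prodthm} the pointwise product $fg$ then belongs to $\mathcal{E}_{4d\omega}(\mathcal{L})=\mathcal{E}_{4d^{2}\sigma^{2}}(\mathcal{L})$. To re-enter the scale of Bernstein spaces I would use the right-hand inclusion in (\ref{incl}), which reads $\mathcal{E}_{\tau^{2}d}(\mathcal{L})\subset\mathbf{B}_{\tau\sqrt{d}}(\mathbb{D})$ for any $\tau\geq 0$; choosing $\tau$ so that $\tau^{2}d=4d^{2}\sigma^{2}$, i.e.\ $\tau=2\sigma\sqrt{d}$, yields $fg\in\mathbf{B}_{\tau\sqrt{d}}(\mathbb{D})=\mathbf{B}_{2d\sigma}(\mathbb{D})$, which is exactly the asserted conclusion.

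The only real content here is the bookkeeping of the spectral parameters; everything else is a direct chaining of inclusions already established in the excerpt, so there is no serious obstacle. The one point that needs care is that the embeddings (\ref{incl}) (and Theorem \ref{prodthm}) refer to the specific family $\mathbb{D}=\{D_{1},\dots,D_{d}\}$ appearing in (\ref{Laplacian}), so the product estimate is genuinely a statement about this family and the associated Laplacian $\mathcal{L}$, not about an arbitrary single generator. One could also note that, since $\bigcup_{\sigma}\mathbf{B}_{\sigma}(\mathbb{D})=\bigcup_{j}\mathcal{E}_{\lambda_{j}}(\mathcal{L})$ by (\ref{trig-eigen}), the product of two Bernstein functions is automatically Bernstein; the role of Theorem \ref{prodthm} together with (\ref{incl}) is precisely to make the explicit bandwidth bound $2d\sigma$ quantitative.
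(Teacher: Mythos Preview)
Your argument is correct and is exactly the route the paper intends: the corollary is stated immediately after Theorem~\ref{prodthm} with the remark ``Using this Theorem and (\ref{incl}) we obtain the following,'' and your chain $\mathbf{B}_{\sigma}(\mathbb{D})\subset\mathcal{E}_{\sigma^{2}d}(\mathcal{L})$, then $fg\in\mathcal{E}_{4d^{2}\sigma^{2}}(\mathcal{L})$, then $\mathcal{E}_{4d^{2}\sigma^{2}}(\mathcal{L})\subset\mathbf{B}_{2d\sigma}(\mathbb{D})$ is precisely that computation spelled out. The bookkeeping with $\tau=2\sigma\sqrt{d}$ is correct.
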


{\bf An example. Analysis on  $\mathbb{S}^{d}$}

We will specify the general setup in the case of standard unit sphere. 
Let 
$$
\mathbb{S}^{d}=\left\{x\in \mathbf{R}^{d+1}: \|x\|=1\right\}.
$$

Let $\mathcal{P}_{n}$ denote the space of spherical harmonics of degree $n$, which are restrictions to $\mathbb{S}^{d}$ of harmonic homogeneous polynomials of degree $n$ in $\mathbb{R}^{d}$. The Laplace-Beltrami operator $\Delta_{\mathbb{S}}$ on $\mathbb{S}^{d}$  is a restriction of the regular Laplace operator $\Delta$ in $\mathbb{R}^{d}$. Namely,
 $$
\Delta_{\mathbb{S}}f(x)=\Delta \widetilde{f}(x),\>\>x\in \mathcal{S}^{d},
$$
where $\widetilde{f}(x)$ is the homogeneous extension of $f$: $\>\>\widetilde{f}(x)=f\left(x/\|x\|\right)$. Another way to compute $\Delta_{\mathbb{S}}f(x)$ is to express both $\Delta_{\mathbb{S}}$ and $f$ in a spherical coordinate system.

Each $\mathcal{P}_{n}$ is the eigenspace of  $\Delta_{\mathbb{S}}$ that corresponds to the eigenvalue $-n(n+d-1)$. Let $Y_{n,l},\>\>l=1,...,l_{n}$ be an orthonormal basis in $\mathcal{P}_{n}$.

Let $e_{1},...,e_{d+1}$ be the standard orthonormal basis in $\mathbb{R}^{d+1}$.  If $SO(d+1)$ and $SO(d)$ are the groups of rotations of $\mathbb{R}^{d+1}$ and  $\mathbb{R}^{d}$ respectively then $\mathbb{S}^{d}=SO(d+1)/SO(d)$.

On $\mathbb{S}^{d}$ we consider vector fields
$$
X_{i,j}=x_{j}\partial_{x_{i}}-x_{i}\partial_{x_{j}}
$$
which are generators of one-parameter groups of rotations   $\exp tX_{i,j}\in SO(d+1)$ in the plane $(x_{i}, x_{j})$. These groups are defined by the formulas 
$$
\exp \tau X_{i,j}\cdot (x_{1},...,x_{d+1})=(x_{1},...,x_{i}\cos \tau -x_{j}\sin \tau ,..., x_{i}\sin \tau +x_{j}\cos \tau ,..., x_{d+1})
$$
Let $e^{\tau X_{i,j}}$ be a one-parameter group which is a representation of $\exp \tau X_{i,j}$ in a space $L_{p}(\mathbb{S}^{d})$. It acts on $f\in L_{p}(\mathbb{S}^{d})$ by the following formula
$$
e^{\tau X_{i,j}}f(x_{1},...,x_{d+1})=f(x_{1},...,x_{i}\cos \tau -x_{j}\sin \tau ,..., x_{i}\sin \tau +x_{j}\cos \tau ,..., x_{d+1}).
$$
Let $D_{i,j}$ be a generator of $e^{\tau X_{i,j}}$ in $L_{p}(\mathbb{S}^{d})$. In a standard way the Laplace-Beltrami operator $\mathcal{L}$ can be identified with an operator in $L_{p}(\mathbb{S}^{d})$ for which we will keep the same notation. One has 
 $$
\Delta_{\mathbb{S}}=\mathcal{L}=\sum_{(i,j)}D_{i,j}^{2}.
$$

\section{Applications}\label{App}

\subsection{Compact homogeneous manifolds}\label{compact}

We return to  setup of subsection 5.1.
Since $D_{j},\>1\leq j\leq d$ generates a group $e^{\tau D_{j}}$ in $L_{p}(\mathbf{M})$ the formulas 
 (\ref{B1}) give for $f\in \mathbf{B}_{\sigma}(\mathbb{D})$

\begin{equation}\label{B11}
D_{j}^{2m-1}f(x)=\mathcal{B}^{(2m-1)}_{j}(\sigma)f(x)=
\left(\frac{\sigma}{\pi}\right)^{2m-1}\sum_{k\in \mathbb{Z}}(-1)^{k+1}A_{m,k}e^{\frac{\pi}{\sigma}(k-1/2)D_{j}}f(x),\>\>m\in \mathbb{N},
\end{equation}

\begin{equation}\label{B22}
D_{j}^{2m}f=\mathcal{B}^{(2m)}_{j}(\sigma)f=
\left(\frac{\sigma}{\pi}\right)^{2m}\sum_{k\in \mathbb{Z}}(-1)^{k+1}B_{m,k}e^{\frac{\pi k}{\sigma}D_{j}}f,\>\>m\in \mathbb{N}\cup{0}.
\end{equation}

Note, that  every  vector field $X$ on $\mathbf{M}$ is a linear combination  $\sum_{j=1}^{d}a_{j}(x)D_{j},\>\>\>x\in \mathbf{M}$. Thus we can formulate the following fact.

\begin{theorem}\label{lincomb}
If $f\in \mathbf{B}_{\sigma}(\mathbb{D})$ then for every vector field  $X=\sum_{j=1}^{d}a_{j}(x)D_{j}$ on $\mathbf{M}$
\begin{equation}
Xf=\sum_{j=1}^{d}a_{j}(x)\mathcal{B}_{j}(\sigma)f,
\end{equation}
where $\mathcal{B}_{j}(\sigma)=\mathcal{B}_{D_{j}}^{1}(\sigma)$.
\end{theorem}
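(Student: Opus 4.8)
The plan is to reduce the statement to the already-established Boas-type formula \eqref{B1} (in the case $r=1$), applied separately to each of the one-parameter groups $e^{\tau D_{j}}$. The key point to verify first is that $f \in \mathbf{B}_{\sigma}(\mathbb{D})$ implies $f \in \mathbf{B}_{\sigma}(D_{j})$ for each fixed $j$, where $\mathbf{B}_{\sigma}(D_{j})$ is the Bernstein space associated with the \emph{single} generator $D_{j}$ in the sense of Section~\ref{sec2}. This is immediate from the definition: the defining inequalities \eqref{Bern} for $\mathbf{B}_{\sigma}^{p}(\mathbb{D})$ include, as the special case $i_{1} = \cdots = i_{k} = j$, the inequalities $\|D_{j}^{k} f\|_{p} \le \sigma^{k}\|f\|_{p}$, which is exactly the condition \eqref{Bernstein} defining $\mathbf{B}_{\sigma}(D_{j})$. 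Moreover each $e^{\tau D_{j}}$ is a strongly continuous group of isometries on $L_{p}(\mathbf{M})$ (translation along integral curves preserves the invariant measure), so the hypotheses of the theorem underlying \eqref{B1} are met.

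Next I would apply formula \eqref{B1} with $r=1$ and $D = D_{j}$: for $f \in \mathbf{B}_{\sigma}(D_{j})$,
$$
D_{j} f = \mathcal{B}_{D_{j}}^{(1)}(\sigma) f = \mathcal{B}_{j}(\sigma) f,
$$
with the series on the right converging in $L_{p}(\mathbf{M})$ by the norm estimates \eqref{norms}. Since $f \in \mathbf{B}_{\sigma}(\mathbb{D}) \subset \mathbf{B}_{\sigma}(D_{j})$ for every $j$, this holds simultaneously for all $j = 1,\dots,d$. Then, using the pointwise decomposition $X = \sum_{j=1}^{d} a_{j}(x) D_{j}$ of the vector field $X$ valid on $\mathbf{M}$, one multiplies each identity $D_{j} f = \mathcal{B}_{j}(\sigma) f$ by the smooth (hence bounded) function $a_{j}(x)$ and sums over $j$ to obtain
$$
X f = \sum_{j=1}^{d} a_{j}(x) D_{j} f = \sum_{j=1}^{d} a_{j}(x)\, \mathcal{B}_{j}(\sigma) f,
$$
which is the claimed formula.

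The only mild subtlety, and what I expect to be the main point requiring care, is the interplay between the pointwise multiplication by $a_{j}(x)$ and the norm convergence of the series defining $\mathcal{B}_{j}(\sigma) f$: one should note that multiplication by a fixed $a_{j} \in C^{\infty}(\mathbf{M})$ is a bounded operator on $L_{p}(\mathbf{M})$, so it commutes with the (norm-convergent) infinite sum, and that the finite sum over $j$ of such terms is again a well-defined element of $L_{p}(\mathbf{M})$; equivalently one may simply interpret the identity pointwise almost everywhere, since $D_{j} f \in L_{p}(\mathbf{M})$ is well-defined and the decomposition $X = \sum_j a_j D_j$ holds at every point. No further obstacle arises: the coefficients $a_{j}(x)$ are the ones coming from expressing $X$ in terms of the frame $D_{1},\dots,D_{d}$, and the convergence and boundedness have all been handled in Section~\ref{sec2}.
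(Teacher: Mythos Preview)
Your proposal is correct and follows the same route as the paper: the paper derives the individual identities $D_{j}f=\mathcal{B}_{j}(\sigma)f$ for $f\in\mathbf{B}_{\sigma}(\mathbb{D})$ (formulas (\ref{B11})--(\ref{B22})) from the general Boas-type formula (\ref{B1}), and then simply notes that any vector field on $\mathbf{M}$ is a pointwise combination $\sum_{j}a_{j}(x)D_{j}$, whence the theorem is stated without further proof. Your write-up is in fact more careful than the paper's, making explicit the inclusion $\mathbf{B}_{\sigma}(\mathbb{D})\subset\mathbf{B}_{\sigma}(D_{j})$ and the boundedness of multiplication by $a_{j}\in C^{\infty}(\mathbf{M})$.
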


Moreover, every linear combination $X=\sum_{j=1}^{d}a_{j}X_{j}$ with constant coefficients  can be identified with a generator $D=\sum_{j=1}^{d}a_{j}D_{j}$ of a bounded strongly continuous group of operators $e^{tD}$ in $L_{p}(\mathbf{M}),\>\>\>1\leq p\leq \infty$.

A commutator 
\begin{equation}\label{com}
[D_{l}, D_{m}]=D_{l}D_{m}-D_{m}D_{l}=\sum_{j=1}^{d}c_{j}D_{j}
\end{equation}
 where  constant  coefficients $c_{j}$ here  are known as  structural constants of the Lie algebra  is another generator of an one-parameter group of translations.
Formulas (\ref{B11})   imply the following relations.

\begin{theorem}
If $D=\sum_{j=1}^{d}a_{j}D_{j}$ then for operators   $\mathcal{B}_{D}(\sigma) =\mathcal{B}_{D}^{1}(\sigma)  $  and all  $f\in \mathbf{B}_{\sigma}(\mathbb{D})$
\begin{equation}
\mathcal{B}_{D}(\sigma)f=\sum_{j=1}^{d}a_{j}\mathcal{B}_{j}(\sigma)f.
\end{equation}
In particular 
\begin{equation}
[D_{l}, D_{m}]f=\mathcal{B}_{l}(\sigma)\mathcal{B}_{m}(\sigma)f-\mathcal{B}_{m}(\sigma)\mathcal{B}_{l}(\sigma)f=\sum_{j=1}^{d}c_{j}\mathcal{B}_{j}(\sigma)f.
\end{equation}

Moreover,
\begin{equation}
D_{j_{1}}...D_{j_{k}}=\mathcal{B}_{j_{1}}(\sigma)...\mathcal{B}_{j_{k}}(\sigma)f.
\end{equation}
\end{theorem}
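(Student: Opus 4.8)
The plan is to reduce all three identities to one elementary observation and then bootstrap by induction, so that nothing new is needed beyond the single‑operator Boas formula already proved. First I would record the basic fact that if $f\in\mathbf{B}_{\sigma}(\mathbb{D})$ then, choosing $i_{1}=\dots=i_{k}=j$ in the defining inequalities (\ref{Bern}), one gets $\|D_{j}^{k}f\|_{p}\leq\sigma^{k}\|f\|_{p}$ for every $k$, i.e. $f\in\mathbf{B}_{\sigma}(D_{j})$ for the single generator $D_{j}$ (which generates the isometric translation group $e^{\tau D_{j}}$ on $L_{p}(\mathbf{M})$). Hence the Boas‑type formula (\ref{B1}), applied to $D_{j}$ with $r=1$, gives $D_{j}f=\mathcal{B}_{j}(\sigma)f$. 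Combined with the fact (established in \cite{Pes08}) that $\mathbf{B}_{\sigma}(\mathbb{D})$ is invariant under each $D_{j}$, this says that on $\mathbf{B}_{\sigma}(\mathbb{D})$ the bounded operator $\mathcal{B}_{j}(\sigma)$ coincides with $D_{j}$ and leaves $\mathbf{B}_{\sigma}(\mathbb{D})$ invariant.

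Next I would prove the ``moreover'' identity $D_{j_{1}}\dots D_{j_{k}}f=\mathcal{B}_{j_{1}}(\sigma)\dots\mathcal{B}_{j_{k}}(\sigma)f$ by induction on $k$, the case $k=1$ being the observation above. For the step, set $g=D_{j_{2}}\dots D_{j_{k}}f$; by repeated use of the $D_{j}$‑invariance of $\mathbf{B}_{\sigma}(\mathbb{D})$ we have $g\in\mathbf{B}_{\sigma}(\mathbb{D})$, and by the inductive hypothesis $g=\mathcal{B}_{j_{2}}(\sigma)\dots\mathcal{B}_{j_{k}}(\sigma)f$; applying the $k=1$ case to $g$ and $D_{j_{1}}$ yields $D_{j_{1}}g=\mathcal{B}_{j_{1}}(\sigma)g$, which is exactly the claim. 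The commutator identity is then immediate: $[D_{l},D_{m}]f=D_{l}D_{m}f-D_{m}D_{l}f=\mathcal{B}_{l}(\sigma)\mathcal{B}_{m}(\sigma)f-\mathcal{B}_{m}(\sigma)\mathcal{B}_{l}(\sigma)f$ by the $k=2$ case, while $[D_{l},D_{m}]=\sum_{j}c_{j}D_{j}$ and linearity give $[D_{l},D_{m}]f=\sum_{j}c_{j}D_{j}f=\sum_{j}c_{j}\mathcal{B}_{j}(\sigma)f$. For the first identity, linearity together with the constancy of the $a_{j}$ gives $\sum_{j}a_{j}\mathcal{B}_{j}(\sigma)f=\sum_{j}a_{j}D_{j}f=Df$, so all that remains is to identify $Df$ with $\mathcal{B}_{D}(\sigma)f$: since $X=\sum_{j}a_{j}X_{j}$ lies in the Lie algebra of $G$, the flow $\exp\tau X\in G$ is measure preserving on $\mathbf{M}$, hence $e^{\tau D}$ is isometric on $L_{p}(\mathbf{M})$, so $\mathcal{B}_{D}(\sigma)$ is a well‑defined bounded operator and the Boas formula (\ref{B1}) for $D$ gives $\mathcal{B}_{D}(\sigma)f=Df$ as soon as $f$ is known to be a Bernstein vector for $D$.

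The step I expect to be the real obstacle is precisely this last point: one must check that $f\in\mathbf{B}_{\sigma}(\mathbb{D})$ forces $f$ to be a Bernstein vector for $D$ of type \emph{exactly} $\sigma$, since otherwise the reproducing operator would be $\mathcal{B}_{D}(\sigma')$ for some larger $\sigma'$. Expanding $D^{k}=\sum a_{i_{1}}\dots a_{i_{k}}D_{i_{1}}\dots D_{i_{k}}$ and using (\ref{Bern}) only yields $\|D^{k}f\|_{p}\leq\bigl(\sum_{j}|a_{j}|\bigr)^{k}\sigma^{k}\|f\|_{p}$, i.e. $f\in\mathbf{B}_{\sigma'}(D)$ with $\sigma'=\sigma\sum_{j}|a_{j}|$. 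I would close the gap either by invoking (\ref{sigmaB}), which says that the value $\mathcal{B}^{(1)}_{D}(\cdot)f$ does not depend on the type parameter once it dominates the true Bernstein type of $f$, or, more sharply, by using the embedding (\ref{incl}): $f\in\mathbf{B}_{\sigma}(\mathbb{D})\subset\mathcal{E}_{\sigma^{2}d}(\mathcal{L})$ lies in a finite‑dimensional $\mathcal{L}$‑eigenspace on which each $D_{j}$, hence $D$, acts with an explicitly controlled spectral bound, which pins down the exact type of $f$ relative to $D$. For the commutator one moreover has $[D_{l},D_{m}]=\sum_{j}c_{j}D_{j}$ with $\sum_{j}|c_{j}|\leq1$ (on $\mathbb{S}^{d}$, for instance, the bracket of two coordinate rotations is again a coordinate rotation), so there the difficulty disappears and $\sigma$ is already the correct type. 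Apart from this bookkeeping, the whole argument is a routine unwinding of the definitions and of the single‑operator Boas formula.
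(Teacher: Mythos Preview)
Your argument for the last two displays is correct and is exactly what the paper has in mind: the paper deduces the whole theorem in one sentence (``Formulas (\ref{B11}) imply the following relations''), i.e.\ from $D_{j}f=\mathcal{B}_{j}(\sigma)f$ for $f\in\mathbf{B}_{\sigma}(\mathbb{D})$ together with the $D_{j}$--invariance of $\mathbf{B}_{\sigma}(\mathbb{D})$; your induction simply makes this explicit. (Your aside about $\sum|c_{j}|\le 1$ for the structural constants is unnecessary: the commutator display does not contain $\mathcal{B}_{[D_{l},D_{m}]}(\sigma)$, only $\sum c_{j}\mathcal{B}_{j}(\sigma)$, so it needs nothing beyond $D_{j}f=\mathcal{B}_{j}(\sigma)f$.)

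The difficulty you flag in the first display is genuine, and neither of your proposed fixes closes it. Corollary~(\ref{sigmaB}) gives $\mathcal{B}_{D}^{(1)}(\sigma_{1})f=\mathcal{B}_{D}^{(1)}(\sigma_{2})f$ only for $\sigma_{1},\sigma_{2}$ \emph{above} the $D$-type of $f$; it says nothing about $\mathcal{B}_{D}(\sigma)f$ when $\sigma$ lies below that type, which is precisely the situation here. The embedding (\ref{incl}) does not help either: it controls the $\mathcal{L}$-spectrum of $f$, but for $D=\sum a_{j}D_{j}$ that only reproduces the crude bound $\|D^{k}f\|\le(\sigma\sum|a_{j}|)^{k}\|f\|$ you already had. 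In fact the first display can fail as written: on the $2$-torus with $D_{1}=\partial_{\theta_{1}}$, $D_{2}=\partial_{\theta_{2}}$, $a_{1}=a_{2}=1$, the function $f=e^{i\sigma(\theta_{1}+\theta_{2})}$ (integer $\sigma$) lies in $\mathbf{B}_{\sigma}(\mathbb{D})$ but has $D$-type $2\sigma$, and a direct computation gives $\mathcal{B}_{D}(\sigma)f=0\neq 2i\sigma f=Df=\sum_{j}a_{j}\mathcal{B}_{j}(\sigma)f$. The paper offers no argument for this display beyond the one-line reference to (\ref{B11}), so the gap is in the statement rather than in your strategy; the clean repair is to add the normalization $\sum_{j}|a_{j}|\le 1$ (or to replace $\mathcal{B}_{D}(\sigma)$ by $\mathcal{B}_{D}\bigl(\sigma\sum_{j}|a_{j}|\bigr)$), after which your argument goes through verbatim.
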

Clearly, for any two smooth functions $f,\>g$ on $\mathbf{M}$ one has 
$$
D_{j}(fg)(x)=f(x)D_{j}g(x)+g(x)D_{j}f(x).
$$
If $D=\sum_{j=1}^{d}a_{j}(x)D_{j}$ then  for $f,  \>g\in \mathbf{B}_{\sigma}(\mathbb{D})$ the following equality holds
$$
D\left(fg\right)(x)=\sum_{j=1}^{d}a_{j}(x)\left\{f(x)\mathcal{B}_{j}(\sigma)g(x)+g(x)\mathcal{B}_{j}(\sigma)f(x)\right\}.
$$
It is the Corollary \ref{prod} which allows to formulate the following result.
\begin{theorem}
If $f,  \>g\in \mathbf{B}_{\sigma}(\mathbb{D})$ and $D=\sum_{j=1}^{d}a_{j}D_{j}$, where $a_{j}$ are constants then 
$$
\mathcal{B}_{D}(2d\sigma)(fg)(x)=\sum_{j=1}^{d}a_{j}\left\{f(x)\mathcal{B}_{j}(\sigma)g(x)+g(x)\mathcal{B}_{j}(\sigma)f(x)\right\}.
$$

\end{theorem}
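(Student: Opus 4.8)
The asserted identity is a purely formal consequence of the product estimate for Bernstein vectors on $\mathbf{M}$ (Corollary~\ref{prod}), the representation of $\mathcal{B}_D(\cdot)$ through the operators $\mathcal{B}_j(\cdot)$ on joint Bernstein spaces (the theorem preceding this one, which asserts $\mathcal{B}_D(\rho)h=\sum_{j}a_j\mathcal{B}_j(\rho)h$ for $h\in\mathbf{B}_\rho(\mathbb{D})$), the classical Leibniz rule for the first order operators $D_j$, and the single generator Boas formula~(\ref{B11}) with $m=1$ (equivalently~(\ref{B1}),~(\ref{powerB}) with $r=1$). The plan is to rewrite the left hand side with a Boas formula at bandwidth $2d\sigma$, to rewrite the right hand side with Boas formulas at bandwidth $\sigma$, and to observe that both sides collapse to $D(fg)$.

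First I would record the membership facts that drive the argument. Since $f,g\in\mathbf{B}_\sigma(\mathbb{D})$, Corollary~\ref{prod} gives $fg\in\mathbf{B}_{2d\sigma}(\mathbb{D})$. Taking in the definition of $\mathbf{B}_\rho(\mathbb{D})$ all indices equal to a fixed $j$ shows that $\mathbf{B}_\rho(\mathbb{D})\subset\mathbf{B}_\rho(D_j)$ for every $1\leq j\leq d$ and every $\rho\geq 0$; in particular $f,g\in\mathbf{B}_\sigma(D_j)$ and $fg\in\mathbf{B}_{2d\sigma}(D_j)$. Each $D_j$ generates a group of isometries of $L_p(\mathbf{M})$, and $D=\sum_{j}a_jD_j$ with constant $a_j$ is the generator $D_X$ associated to the vector field $X=\sum_{j}a_jX_j$; since $\exp\tau X\in G$ preserves the normalized invariant measure, $e^{\tau D}$ is a group of isometries of $L_p(\mathbf{M})$ as well, so all results of Section~\ref{sec2} apply to $D_1,\dots,D_d$ and to $D$.

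Now I would compute the two sides. Applying the preceding theorem with $\rho=2d\sigma$ to $h=fg\in\mathbf{B}_{2d\sigma}(\mathbb{D})$ gives
$$
\mathcal{B}_D(2d\sigma)(fg)=\sum_{j=1}^{d}a_j\,\mathcal{B}_j(2d\sigma)(fg),
$$
and since $fg\in\mathbf{B}_{2d\sigma}(D_j)$ the single generator formula~(\ref{B1}) yields $\mathcal{B}_j(2d\sigma)(fg)=D_j(fg)$, so the left hand side equals $\sum_{j=1}^{d}a_j\,D_j(fg)$. On the other hand $f,g\in\mathbf{B}_\sigma(D_j)$, so the same formula at bandwidth $\sigma$ gives $\mathcal{B}_j(\sigma)f=D_jf$ and $\mathcal{B}_j(\sigma)g=D_jg$, whence by the Leibniz rule $D_j(fg)=f\,D_jg+g\,D_jf$
$$
\sum_{j=1}^{d}a_j\left\{f\,\mathcal{B}_j(\sigma)g+g\,\mathcal{B}_j(\sigma)f\right\}=\sum_{j=1}^{d}a_j\left\{f\,D_jg+g\,D_jf\right\}=\sum_{j=1}^{d}a_j\,D_j(fg).
$$
Comparing the two displays proves the identity.

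There is no real obstacle here beyond careful bookkeeping of bandwidths: the product $fg$ must be treated at the enlarged bandwidth $2d\sigma$ delivered by Corollary~\ref{prod}, while the factors $f$ and $g$ are treated at their own bandwidth $\sigma$, and the operators $\mathcal{B}_D(2d\sigma)$, $\mathcal{B}_j(2d\sigma)$, $\mathcal{B}_j(\sigma)$ are pinned down on the relevant Bernstein spaces by the single generator Boas formula. The only genuinely substantive ingredient is the product estimate of Corollary~\ref{prod}, which itself rests on Theorem~\ref{prodthm}; the remainder is a direct chaining of formulas already established.
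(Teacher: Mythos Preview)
Your argument is correct and follows exactly the route the paper sketches just before the theorem: use Corollary~\ref{prod} to place $fg$ in $\mathbf{B}_{2d\sigma}(\mathbb{D})$, identify the left side with $\sum_j a_j D_j(fg)$ via the preceding theorem and the single-generator Boas formula, and identify the right side with the same quantity via the Leibniz rule and $\mathcal{B}_j(\sigma)f=D_jf$, $\mathcal{B}_j(\sigma)g=D_jg$. Your write-up is in fact more explicit than the paper's, which merely records the Leibniz identity, notes that Corollary~\ref{prod} is the key input, and states the theorem without further proof.
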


\bigskip

The formula $-\mathcal{L}=D_{1}^{2}+ D_{2}^{2}+ ...+ D_{d}^{2}$ implies the following result.

\begin{theorem}
If $f\in \mathbf{B}_{\sigma}(\mathbb{D})$ then
\begin{equation}\label{lap}
\mathcal{L}f=\mathcal{B}_{\mathcal{L}}^{2}(\sigma)f=\sum_{j=1}^{d}\mathcal{B}_{j}^{2}(\sigma)f.
\end{equation}

\end{theorem}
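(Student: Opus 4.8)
The plan is to reduce the statement about $\mathcal{L}$ to the already-established Boas-type formulas for the individual generators $D_j$. The key observation is that by formula~(\ref{powerB}) applied to $D_j$ with $r=2$, we have $D_j^2 f = \mathcal{B}_{D_j}^{(2)}(\sigma)f = \mathcal{B}_j^2(\sigma)f$ for every $f\in\mathbf{B}_\sigma(\mathbb{D})$, since each such $f$ lies in $\mathbf{B}_\sigma(D_j)$ (the Bernstein inequality~(\ref{Bern}) in particular gives $\|D_j^k f\|_p\le\sigma^k\|f\|_p$). First I would invoke this identity for each $j=1,\dots,d$.

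Next I would simply sum over $j$. Using the defining relation $-\mathcal{L}=D_1^2+D_2^2+\cdots+D_d^2$ from~(\ref{Laplacian}), we get
\begin{equation}
-\mathcal{L}f = \sum_{j=1}^{d} D_j^2 f = \sum_{j=1}^{d}\mathcal{B}_j^2(\sigma)f,
\end{equation}
valid for $f\in\mathbf{B}_\sigma(\mathbb{D})$. (I note the sign: the theorem as displayed writes $\mathcal{L}f$ on the left, which is consistent with the convention where $\mathcal{L}$ denotes the operator $D_1^2+\cdots+D_d^2$ itself; in either case the proof is the same summation.) The notation $\mathcal{B}_{\mathcal{L}}^{2}(\sigma)f$ in~(\ref{lap}) is then just shorthand for this sum $\sum_{j=1}^d\mathcal{B}_j^2(\sigma)f$, so the two displayed equalities are identifications of notation rather than separate claims.

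The only point requiring a little care is the interchange of the (finite) sum over $j$ with the infinite series defining each $\mathcal{B}_j(\sigma)$ when one expands $\mathcal{B}_j^2(\sigma)f$; but since each series converges absolutely in $E=L_p(\mathbf{M})$ by the estimates~(\ref{id-2})--(\ref{norms}), and the outer sum over $j$ is finite, no issue arises. I expect the main (very minor) obstacle to be bookkeeping: making sure that $f\in\mathbf{B}_\sigma(\mathbb{D})$ really does satisfy the hypotheses of the earlier theorem for each single operator $D_j$ — this is immediate from~(\ref{Bern}) with all indices equal to $j$ — and that the composition $\mathcal{B}_j^2(\sigma)f=\mathcal{B}_j(\sigma)(\mathcal{B}_j(\sigma)f)$ is well-defined, which follows because $\mathcal{B}_j(\sigma)$ is a bounded operator on all of $E$ by~(\ref{norms}). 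With these remarks the theorem follows in two lines.
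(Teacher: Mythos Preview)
Your proposal is correct and follows exactly the paper's approach: the paper offers no detailed proof, only the single remark ``The formula $-\mathcal{L}=D_{1}^{2}+\cdots+D_{d}^{2}$ implies the following result,'' and you have simply spelled out the obvious two-line argument behind that sentence (invoke~(\ref{powerB}) for each $D_j$ on $\mathbf{B}_\sigma(\mathbb{D})\subset\mathbf{B}_\sigma(D_j)$, then sum). Your caveats about the sign and about the meaning of the symbol $\mathcal{B}_{\mathcal{L}}^{2}(\sigma)$ are well taken; the paper is loose on both points, and the subsequent Remark suggests $\mathcal{B}_{\mathcal{L}}^{2}(\sigma)$ is meant to refer to a Boas-type operator built from the group $e^{it\mathcal{L}}$ rather than mere shorthand, but this middle equality is neither made precise nor proved in the paper either.
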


\begin{remark}
Note that it is not easy to find "closed" formulas for groups like $e^{it\mathcal{L}}$. Of course, one always has  a representation 
\begin{equation}\label{exp}
e^{it\mathcal{L}}f(x)=\int_{\mathbf{M}}K(x,y)f(y)dy, 
\end{equation}
with  $K(x,y)=\sum_{\lambda_{j}}e^{it\lambda_{j}}u_{j}(x)\overline{u}_{j}(y)$, where  $\{u_{j}\}$ is a complete orthonormal system of eigenfunctions of $\mathcal{L}$ in $L_{2}(\mathbf{M})$ and $\mathcal{L}u_{j}=\lambda_{j}$. But the formula (\ref{exp}) doesn't tell much about $e^{it\mathcal{L}}$. In other words the explicit formulas for  operators $\mathcal{B}_{\mathcal{L}}^{2}(\sigma)$  usually  unknown. At the same time it is easy to understand the right-hand side in (\ref{lap})   since  it is  coming from translations on a manifold in certain basic directions (think for example of a sphere).

\end{remark}

Let us introduce the  notations
$$\mathcal{B}^{(2m-1)}_{j}(\sigma,N)f=
\left(\frac{\sigma}{\pi}\right)^{2m-1}\sum_{|k|\leq N}(-1)^{k+1}A_{m,k}e^{\frac{\pi}{\sigma}(k-1/2)D_{j}}f,
$$
$$
\mathcal{B}^{(2m)}_{j}(\sigma, N)f=
\left(\frac{\sigma}{\pi}\right)^{2m}\sum_{|k|\leq N}(-1)^{k+1}B_{m,k}e^{\frac{\pi k}{\sigma}D_{j}}f.
$$
The following approximate Boas-type formulas hold. 
\begin{theorem}
If $f\in \mathbf{B}_{\sigma}(\mathbb{D})$ then

\begin{equation}
\sum_{j=1}^{d}\alpha_{j}(x)D_{j}f=\sum_{j=1}^{d}\alpha_{j}(x)\mathcal{B}_{j}(\sigma, N)f+O(N^{-2}),
\end{equation}

and if $a_{j}$ are constants then
\begin{equation}
\mathcal{B}_{D}(\sigma)f=\sum_{j=1}^{d}a_{j}\mathcal{B}_{j}(\sigma, N)f+O(N^{-2}),\>\>D=\sum_{j=1}^{d}a_{j}D_{j}.
\end{equation}
Moreover,

\begin{equation}
[D_{l}, D_{m}]f=\sum_{j=1}^{d}c_{j}\mathcal{B}_{j}(\sigma,N)f+O(N^{-2}),
\end{equation}
\begin{equation}
\mathcal{L}f=\sum_{j=1}^{d}\mathcal{B}_{j}^{2}(\sigma, N)f+O(N^{-2}).
\end{equation}

\end{theorem}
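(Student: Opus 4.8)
The plan is to reduce each assertion to the already-established exact Boas-type formula \eqref{B1} (in the form \eqref{powerB} for products of operators) together with the approximation-error estimate \eqref{appB}, and then to linear-combine. First I would recall that for every $f\in\mathbf{B}_\sigma(\mathbb{D})$ each operator $D_j$ generates a strongly continuous group of isometries $e^{\tau D_j}$ on $L_p(\mathbf{M})$ (this is the setup of subsection~5.1), so the one-dimensional theory of Section~\ref{sec2} applies to each $D_j$ separately. In particular, by \eqref{appB} applied with $D=D_j$ and $r=1$,
\begin{equation}
D_j f=\mathcal{B}_j(\sigma,N)f+O(N^{-2}),\qquad 1\le j\le d,
\end{equation}
where the $O(N^{-2})$ is uniform over $j$ because the coefficients $A_{1,k}$ decay like $O(k^{-2})$ and $\|e^{\tau D_j}\|=1$ for all $j$; the implied constant depends only on $\sigma$ and $\|f\|_p$.

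Next I would multiply the $j$-th identity by $\alpha_j(x)$ and sum over $j$. Since $\mathbf{M}$ is compact and the $\alpha_j$ are smooth, they are bounded, so $\sum_j \alpha_j(x)\,O(N^{-2})=O(N^{-2})$ in the supremum norm over $x$, hence in every $L_p(\mathbf{M})$; this yields the first displayed formula. The second is the special case $\alpha_j(x)=a_j$ (constants), combined with the identity $\mathcal{B}_D(\sigma)f=\sum_j a_j\mathcal{B}_j(\sigma)f$ established in the preceding theorem, so that replacing each $\mathcal{B}_j(\sigma)f$ by $\mathcal{B}_j(\sigma,N)f$ costs only $\sum_j|a_j|\,O(N^{-2})=O(N^{-2})$. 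For the commutator formula I would use \eqref{com}: since $[D_l,D_m]=\sum_j c_j D_j$ is itself a generator of a group of translations, applying the already-proved first formula with $\alpha_j=c_j$ gives $[D_l,D_m]f=\sum_j c_j\mathcal{B}_j(\sigma,N)f+O(N^{-2})$. (One could also pass through $\mathcal{B}_l(\sigma)\mathcal{B}_m(\sigma)f-\mathcal{B}_m(\sigma)\mathcal{B}_l(\sigma)f$, but the direct route via the structure constants is cleanest.)

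For the Laplacian formula I would start from \eqref{lap}, namely $\mathcal{L}f=\sum_{j=1}^d\mathcal{B}_j^2(\sigma)f$ for $f\in\mathbf{B}_\sigma(\mathbb{D})$. Here the point needing a little care is that we must replace a \emph{product} of two $\mathcal{B}_j(\sigma)$'s by $\mathcal{B}_j(\sigma,N)$'s, so I would write
\begin{equation}
\mathcal{B}_j^2(\sigma)f-\mathcal{B}_j^2(\sigma,N)f=\bigl(\mathcal{B}_j(\sigma)-\mathcal{B}_j(\sigma,N)\bigr)\mathcal{B}_j(\sigma)f+\mathcal{B}_j(\sigma,N)\bigl(\mathcal{B}_j(\sigma)-\mathcal{B}_j(\sigma,N)\bigr)f,
\end{equation}
and estimate each term: the first factor has norm $O(N^{-2})$ by \eqref{appB} (note $\mathcal{B}_j(\sigma)f\in E$, with $\|\mathcal{B}_j(\sigma)f\|_p\le\sigma\|f\|_p$ by \eqref{norms}), while $\mathcal{B}_j(\sigma,N)$ is bounded uniformly in $N$ by $\sigma$ (it is a partial sum of the series defining $\mathcal{B}_j(\sigma)$, with absolute coefficient sum at most $\sigma$). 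Summing over the finitely many $j$ gives $\mathcal{L}f=\sum_{j=1}^d\mathcal{B}_j^2(\sigma,N)f+O(N^{-2})$.

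The only genuine obstacle is the bookkeeping in this last step — making sure the truncation error for the composition $\mathcal{B}_j\circ\mathcal{B}_j$ is still $O(N^{-2})$ rather than something worse — and this is handled exactly by the telescoping identity above together with the uniform bound $\|\mathcal{B}_j(\sigma,N)\|\le\sigma$; no new idea beyond \eqref{appB}, \eqref{norms}, and the boundedness of the coefficient functions on the compact manifold $\mathbf{M}$ is required.
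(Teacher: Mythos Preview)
Your proposal is correct and matches the paper's approach: the paper states this theorem without proof, treating it as an immediate consequence of the one-dimensional approximate formula \eqref{appB} applied to each $D_j$ together with the exact identities of Theorems~\ref{lincomb}, 5.2, and the Laplacian formula \eqref{lap}. Your write-up is in fact more careful than the paper, particularly the telescoping identity for $\mathcal{B}_j^2(\sigma)f-\mathcal{B}_j^2(\sigma,N)f$ and the observation that $\|\mathcal{B}_j(\sigma,N)\|\le\sigma$ uniformly in $N$, which are needed to keep the composition error at the same order.
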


\subsection{ The Heisenberg group.}\label{non-compact}

In the space $\mathbb{R}^{2n+1}$ with coordinates $(x_{1},...,x_{n},y_{1},...,y_{n}, t)$ we consider vector fields 
$$
X_{j}=\partial_{x_{j}}-\frac{1}{2}y_{j}\partial_{t},\>\>\>Y_{j}=\partial_{y_{j}}+\frac{1}{2}x_{j}\partial_{t},\>\>\> T=\partial_{t}, \>\>1\leq j\leq n.
$$
As operators in the regular space $L_{p}(\mathbb{R}^{2n+1}),\>\>1\leq p\leq \infty,$ they generate one-parameter bounded strongly continuous groups of operators. In fact, these operators form in $L_{p}(\mathbb{R}^{2n+1})$ a representation  of the Lie algebra of the so-called Heisenberg group $\mathbb{H}^{n}$.
The corresponding one-parameter groups are 
$$
e^{\tau X_{j}}f(x_{1},..., t)=f(x_{1},..., x_{j}+\tau,.....,x_{n}, y_{1},...,y_{n}, t-\frac{1}{2}y_{j}\tau)
$$

$$
e^{\tau Y_{j}}f(x_{1},..., t)=f(x_{1},..., x_{n}, y_{1},..., y_{j}+\tau,..., y_{n}, t+\frac{1}{2}x_{j}\tau).
$$
As we already know for every $\sigma>0$ there exists a non-empty  set $\mathbb{B}_{\sigma}(X_{j})$ such that their union is dense in $L_{p}(\mathbb{R}^{2n+1}),\>\>1\leq p\leq \infty,$ and for which the following formulas  hold with $m\in \mathbb{N}$
$$
X^{2m-1}_{j}f=
\left(\frac{\sigma}{\pi}\right)^{2m-1}\sum_{k\in \mathbb{Z}}(-1)^{k+1}A_{m,k}e^{\frac{\pi}{\sigma}(k-1/2)X_{j}}f,
$$

$$
X^{2m-1}_{j}f=
\left(\frac{\sigma}{\pi}\right)^{2m-1}\sum_{|k|<N}(-1)^{k+1}A_{m,k}e^{\frac{\pi}{\sigma}(k-1/2)X_{j}}f+O(N^{-2}),
$$
 for every $\>f\in \mathbb{B}_{\sigma}(X_{j})$. One can easily obtain a number  of similar formulas for $Y_{j}$ and $T$.

\subsection{The Schr\"{o}dinger representation}\label{Schrod}

 Take  $3$-\rm dimensional Heisenberg group and consider what is known as its  Schr\"{o}dinger representation in the regular space $L_{2}(\mathbb{R})$ \cite{Folland}.
The infinitesimal operators of this representation are differentiaton $D=\frac{1}{2\pi i}\frac{d}{dx}$ and multiplication by independent variable $x$ which will be denoted as $X$. Every linear combination $pD+qX$ with constant coefficients $p,\>q$ generates a unitary group in $L_{2}(\mathbb{R})$ according to the formula 
\begin{equation}\label{LC}
e^{2\pi i(pD+qX)}f(x)=e^{2\pi i q x+\pi i pq}f(x+p),
\end{equation}
and in particular
\begin{equation}\label{X}
e^{2\pi i qX}f(x)=e^{2\pi i q x}f(x),\>\>\>\>\>
e^{2\pi i pD}f(x)=f(x+p).
\end{equation}
For every $\sigma>0$ one can consider corresponding spaces $\mathbf{B}_{\sigma}(pD+qX), \mathbf{B}_{\sigma}(D),\mathbf{B}_{\sigma}(X)$ and corresponding operators $\mathcal{B}$ and $\mathcal{Q}$. One of possible Boas-type  formulas would look like
$$
(pD+qX)^{m}f(x)=\mathcal{B}_{pD+qX}^{m}(\sigma)f(x)
$$
and holds for $f\in \mathbf{B}_{\sigma}(pD+qX)$ where $\bigcup_{\sigma\geq 0} \mathbf{B}_{\sigma}(pD+qX)$ is dense in $L_{2}(\mathbb{R})$.

\bigskip

\begin{remark} As it was noticed in \cite{Pes88} the intersection of $\mathbf{B}_{\sigma}(D)$ and $\mathbf{B}_{\sigma}(X)$ contains only $0$. It follows from the fact that $\mathbf{B}_{\sigma}(D)$ is the regular Paley-Wiener space and $\mathbf{B}_{\sigma}(X)$ is the space of functions whose support is in $[-\sigma,\>\sigma], \>\sigma>0$. As a result a formula like 
$$
(pD+qX)f(x)=p\mathcal{B}_{D}(\sigma)f(x)+q\mathcal{B}_{X}(\sigma)f(x)
$$
holds only for $f= 0$ (unlike similar formulas in Theorem \ref{lincomb}).
\end{remark}
\begin{remark}
We note that the operator $D^{2}+X^{2}$ is self-adjoint and  the set $\bigcup_{\sigma\geq 0}B_{\sigma}(D^{2}+X^{2})$ is a span of all Hermit functions.
\end{remark}

\end{document}